\documentclass[12pt]{amsart}
\usepackage{amsfonts}
\usepackage{fullpage}
\usepackage{amsmath,amsthm}
\usepackage{latexsym}
\usepackage{array}
\usepackage{dsfont}
\usepackage{amssymb}
\usepackage{graphics}
\usepackage{enumerate}
\usepackage[english]{babel}

\usepackage{color}
\definecolor{marin}{rgb}   {0.,   0.3,   0.7} 
\definecolor{rouge}{rgb}   {0.8,   0.,   0.} 
\definecolor{sepia}{rgb}   {0.8,   0.5,   0.} 
\usepackage[colorlinks,citecolor=marin,linkcolor=rouge,
            bookmarksopen,
            bookmarksnumbered
           ]{hyperref}

\newtheorem{lemma}{Lemma}[section]

\newtheorem{theorem}[lemma]{Theorem}
\newtheorem{proposition}[lemma]{Proposition}
\newtheorem{corollary}[lemma]{Corollary}
\newtheorem{remark}[lemma]{Remark}
\newtheorem{example}[lemma]{Example}

\newtheorem{notation}[lemma]{Notation}
\newtheorem{definition}[lemma]{Definition}
\newtheorem{conclusion}[lemma]{Conclusion}
\numberwithin{equation}{section}
\newcommand{\QED}{\mbox{}\hfill \raisebox{-0.2pt}{\rule{5.6pt}{6pt}\rule{0pt}{0pt}} 
          \medskip\par}

\newcommand{\eps}{\varepsilon}

\newcommand{\dd}{\mathrm{d}}

\newcommand{\Hc}{\mathcal{H}}

\newcommand{\R}{\mathbb{R}}

\newcommand{\T}{\mathbb{T}}
\newcommand{\Lc}{\mathcal{L}}

\newcommand{\Z}{\mathbb{Z}}



\newcommand{\Norm}[2]{\|#1\|\left.\vphantom{T_{j_0}^0}\!\!\right._{#2}}


\author{Erwan Faou}
\address{INRIA \& ENS Cachan Bretagne  \\
Avenue Robert Schumann F-35170 Bruz, France. } 
\email{Erwan.Faou@inria.fr}

\author{ Fr\'ed\'eric Rousset }
\address{ Laboratoire de Math\'ematiques d'Orsay (UMR 8628) Universit\'e Paris-Sud et Institut Universitaire de France}
  \email{frederic.rousset@math.u-psud.fr}

\title[Landau damping   for the Vlasov-HMF model]
{ Landau damping in Sobolev spaces \\ for the Vlasov-HMF model}

\begin{document}

\begin{abstract}
We consider the Vlasov-HMF (Hamiltonian Mean-Field) model. We consider solutions starting in a small Sobolev neighborhood of a  spatially homogeneous state  satisfying
 a linearized stability criterion (Penrose criterion). 
We prove that  these  solutions exhibit  a scattering behavior to a modified state, which implies a nonlinear Landau damping effect with polynomial rate of damping. \end{abstract}

\subjclass{ 35Q83, 35P25 }
\keywords{Vlasov equations, Scattering, Landau damping, HMF model}
\thanks{
}

\maketitle

\section{Introduction}

In this paper we consider the Vlasov-HMF model. This model has received much interest in the physics
literature for many reasons:  It  is a simple ideal toy model that keeps several features of the long range interactions,  it is 
a simplification of physical systems like charged or gravitational sheet models and it is rather easy to make numerical simulations on it. We refer for example  to \cite{Bouchet1}, \cite{Bouchet2}, \cite{Bouchet3}, \cite{Caglioti-Rousset1}, \cite{Caglioti-Rousset2}   for more details.

We shall study the long time behavior of solutions to this model for initial data that are  small perturbations in a  weighted  Sobolev space to a  spatially homogeneous stationary state satisfying a Penrose type
stability condition. We shall prove that   the solution scatters when times goes to infinity towards a modified state close to the initial data in a Sobolev space of lower order. This result implies a nonlinear Landau damping effect with polynomial rate  for the solution which converges weakly towards a modified spatially homogeneous state. 
In  the case of analytic  or Gevrey regularity, this result has been shown  to hold for a large class of Vlasov equations that contains the Vlasov-Poisson system   by Mouhot and Villani \cite{Mouhot-Villani} (see also the recent simplified proof
 \cite{Bedrossian-Masmoudi-Mouhot}). Some earlier partial results were obtained in \cite{Caglioti-Maffei}, \cite{hwang}. The related problem of the stability of the Couette flow in the two-dimensional Euler equation has been also studied recently
  \cite{Bedrossian-Masmoudi}.  
    The  question  left open  in these papers is  the possibility of nonlinear Landau damping for Sobolev perturbations.
      In this case, one cannot hope for an exponential damping, but we can wonder if it could occur by allowing  polynomial rates.
     For the Vlasov-Poisson system, it was proven in \cite{Lin} that this is false
      with rough Sobolev regularity due to the presence of  arbitrarily close travelling BGK states.  Nevertheless, this obstruction disappears for sufficiently high Sobolev regularity
      as also proven in \cite{Lin}. These  arguments can also  be extended to a large class of Vlasov equations and in particular the HMF model. Note that   the regularity of the interaction kernel does not play an essential part in the argument of \cite{Mouhot-Villani} (though the decay in Fourier space provided by the  Coulomb interaction seems critical), 
       it is more the nonlinear ``plasma-echo" effect which is crucial to handle.
  Besides the physical interest of the HMF model, it is thus mathematically interesting  to study the  possibility of nonlinear Landau-damping in Sobolev spaces for this simple model.

\subsection{The Vlasov-HMF model}
 The Vlasov-HMF model reads
\begin{equation}
\label{eq:hmf1}
\partial_t f (t,x,v)+  v \partial_x f (t,x,v) =  \partial_{x}
 \Big(\int_{\R \times \mathbb{T}} P(x-y) f(t,y,u) \dd u \dd y \Big) \partial_v f(t,x,v), 
\end{equation}
where $(x,v) \in \T \times \R$ and  the kernel $P(x)$ is given by  $P(x) = \cos(x)$. Note that the main difference with the Vlasov-Poisson equation is the regularity of the kernel: in this latter case, $P(x) = \sum_{k \geq 0} k^{-2} \cos(k x)$ is the kernel associated with the inverse of the Laplace operator.  The HMF model is thus the simplest nonlinear model with the structure \eqref{eq:hmf1}. We consider  initial data under the form $ f_{0}(x,v)= \eta(v) + \eps r_{0}(x,v)$
 where $\eps$ is a small parameter and $r_{0}$ is of size one (in a suitable functional space). This means that we study small perturbations of a stationary solution $\eta(v)$.
 We shall thus  write  the solution at time $t$ under the form
$$
f(t,x,v) = \eta(v) + \varepsilon r(t,x,v). 
$$
 We are interested in the study of the behavior of $f$ when time goes to infinity. 
 To filter the effect of the free transport, it is convenient to introduce (as in \cite{Mouhot-Villani}, \cite{Bedrossian-Masmoudi-Mouhot}) the  unknown
 $g(t,x,v ) = r(t,x + tv,v)$  that is solution of the equation
\begin{equation}
\label{eq:vp3}
\partial_t g = \{Ê\phi(t,g), \eta\}  + \varepsilon \{ \phi(t,g),g \}. 
\end{equation}
where 
\begin{equation}
\label{eq:phi}
\phi(t,g) (x,v)= \int_{\R \times \mathbb{T}} ( \cos(x-y + t(v-u)) ) g(t,y,u) \dd u \dd y
\end{equation}
 and  $\{f,g\} = \partial_x f \partial_v g - \partial_v f \partial_x g$ is the usual microcanonical Poisson bracket. 
 We shall  usually write $\phi(t)$ when the dependence in $g$ is clear.

We shall work in the following weighted Sobolev spaces, 
 for 
$m_0 > 1/2$ be given, we set
\begin{equation}
\label{defsob}
\Norm{f}{\Hc^n}^2 = \sum_{|p| + |q| \leqslant n} \int_{\T \times \R} (1 + |v|^2)^{m_0} | \partial_x^p \partial_v^q f|^2 \dd x \dd v , 
\end{equation}
and we shall denote by  $\Hc^n$ the corresponding function space.  Note that compared to the usual Sobolev space $H^n$, there is also a fixed weight
 $(1+|v|^2)^{m_{0}}$ in physical space. The interest of the weight is that functions in $\Hc^0$ are in $L^1$ and thus  it allows to get a pointwise control in Fourier  (see  Lemma \ref{lememb} below).  We do not include the dependence in $m_{0}$ in the notation since $m_{0}$ will be fixed. 
%
%
We  shall  denote  by $\hat \cdot$ or $\mathcal{F}$ the Fourier transform  on $\mathbb{T} \times \mathbb{R}$  given by  
$$
\hat f_k(\xi) = \frac{1}{2 \pi} \int_{\T \times \R} f(x,v) e^{-ikx - i \xi v} \dd x \dd v.
$$
%
%

%
Note that due to the regularity of the interaction kernel and the conservation of the $L^p$ norms, it is very easy to prove the global well-posedness of the Vlasov-HMF model  in 
 $\Hc^{s}$ for every $s \geq 0$. Nevertheless, in order to study the asymptotic behaviour of $g$, the regularity of the kernel is not of obvious help. Indeed, 
 when performing energy estimate on \eqref{eq:vp3}, it costs one positive power of $t$  each time  one puts a $v$ derivative on the kernel.

\subsection{ The Penrose criterion}
\label{sectionpenrose}
 We shall need a stability property of the reference state $\eta$ in order to control the linear part of the Vlasov equation \eqref{eq:vp3}.
 Let us denote by $\eta$, the  spatially homogeneous stationary state and let us define the function
$$ 
K(n,t)= - np_{n}\,  nt \,   \hat{\eta}_{0} (nt) \mathds{1}_{t \geq 0}, \quad t \in \mathbb{R}, \quad n \in \Z,
$$
where $(p_{k})_{k \in \Z}$ are the Fourier coefficients of the kernel $P(x)$. We shall denote by $\hat{K}(n,\tau)= \int_{\mathbb{R}} e^{-i \tau t} K(n,t) \, dt$ the Fourier
transform of $K(n, \cdot)$. 
We shall assume that  $\eta$ satisfies the following condition 
\begin{equation}
\nonumber
{\bf (H)}  \quad  (1 + v^2 ) \eta(v) \in \Hc^5 \quad  \mbox{and } \quad  \exists\,  \kappa>0, \quad   \inf_{ \mathrm{Im}\, \tau \leq 0 }  | 1-  \hat{K}(n,\tau) | \geq \kappa, \quad n= \pm1.
\end{equation}
Note that  thanks to the localization property of $\eta$ in the first part of the assumption, the Fourier transform of $K$ can be indeed continued in the half plane
  $\mathrm{Im}\, \tau \leq 0$.
 Here, the assumption is particularly simple due to the fact that for our kernel, there are only two non-zero Fourier
 modes.
This assumption is very similar to the one used in \cite{Mouhot-Villani}, \cite{Bedrossian-Masmoudi-Mouhot} and can be related to the standard  statement of the Penrose criterion.
 In particular it is verified  for the states $\eta(v)= \rho(|v|)$ with $\rho$ non-increasing which  are also known to be Lyapounov stable for the nonlinear equation
 (see \cite{Marchioro-Pulvirenti}).

\subsection{Main result}

In the evolution of the solution $g(t,x,v)$ of \eqref{eq:vp3}, 
an important role is played by the quantity
\begin{equation}
\label{eq:zetak}
\zeta_k(t) = \hat{g}_k(t,kt), \quad k  \in \{ \pm 1 \}, 
\end{equation}
such that 
$$
\phi(t,g) = \frac12  \sum_{k \in \{ \pm 1\}}e^{ikx}e^{iktv} \zeta_k(t).
$$
Note that  for $k \neq 0$,   $ \zeta_{k}(t)$ is the Fourier coefficient in $x$ of the density $\rho(t,x)= \int_{\mathbb{R}} f(t,x,v) \, dv$.
  This  quantity also plays a key part in the analysis of 
 \cite{Mouhot-Villani}, \cite{Bedrossian-Masmoudi-Mouhot}. Note that  here we need only  to control two Fourier modes due to our simple interaction kernel.

Let us  define for every $s \geq 4$ and $T \geq 0$ the  weighted norm
  $$  Q_{T,s}(g)= \sup_{t \in [0,  T]}  {\| g(t) \|_{\Hc^s} \over \langle t \rangle^3} +   \sup_{t \in [0,T]} \sup_{k \in \{\pm 1\}} \langle t \rangle^{s-1}|\zeta_{k}(t) |  +  \sup_{t \in [0, T ]} \|g(t) \|_{\Hc^{s-4}}.
$$
Our main result is:
\begin{theorem}
\label{maintheo}
Let us fix $s \geq 7$ and $R_{0}>0$ such that $Q_{0, s}(g) \leq R_{0}$ and assume that $\eta\in \Hc^{s+4}$ satisfies the assumption ${\bf (H)}$.  Then there exists $R>0$ and $\eps_{0}>0$ such that for every $\eps \in (0, \eps_{0}]$ and
 for every $T\geq 0$, we have the estimate
 $$ Q_{T, s}(g) \leq R.$$
\end{theorem}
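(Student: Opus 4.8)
The plan is to run a bootstrap (continuation) argument on $Q_{T,s}$. Fix $R$, to be chosen large compared with $R_0$, and let $T^*$ be the supremum of the times $T$ for which $Q_{T,s}(g)\le R$; since $Q_{0,s}(g)\le R_0<R$ and $t\mapsto Q_{t,s}(g)$ is continuous, $T^*>0$, and it suffices to show that on $[0,T^*]$ the a priori bound $Q_{T,s}(g)\le R$ self-improves to $Q_{T,s}(g)\le R/2$, which forces $T^*=+\infty$. The three pieces of $Q_{T,s}$ are estimated separately but are deliberately coupled: the polynomial growth $\langle t\rangle^3$ allowed for $\|g\|_{\Hc^s}$, the decay $\langle t\rangle^{-(s-1)}$ imposed on $\zeta_k$, and the uniform control of $\|g\|_{\Hc^{s-4}}$ are tuned so that the loss of powers of $t$ produced by $v$-derivatives of the kernel (each $\partial_v$ hitting the factor $e^{iktv}$ in $\phi$ costs one power of $t$) is exactly compensated.

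First I would derive the closed equation for the key modes. Writing the linear term as $\{\phi,\eta\}=\eta'(v)\,\partial_x\phi$ (using $\partial_x\eta=0$), Fourier transforming \eqref{eq:vp3}, integrating in time and evaluating at $\xi=kt$, a direct computation using the definition of $K(n,t)$ and $p_{\pm1}=1/2$ gives, for $k\in\{\pm1\}$,
\[
\zeta_k(t)=\hat g_k(0,kt)+\int_0^t K(k,t-s)\,\zeta_k(s)\,\dd s+\eps\int_0^t \widehat{\{\phi,g\}}_k(s,kt)\,\dd s .
\]
Assumption ${\bf (H)}$ is precisely the condition under which this Volterra equation is invertible with a resolvent kernel that decays, so $\zeta_k$ inherits the decay of its source. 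The first source is controlled by the data: by the embedding Lemma~\ref{lememb}, $|\hat g_k(0,kt)|\lesssim \langle t\rangle^{-s}\|g(0)\|_{\Hc^s}\lesssim R_0\langle t\rangle^{-s}$, which decays faster than required. Everything therefore reduces to showing that the nonlinear (``plasma echo'') source decays like $\langle t\rangle^{-(s-1)}$.

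This echo term is the main obstacle. Computing the bilinear bracket in Fourier and evaluating at $\xi=kt$ yields
\[
\widehat{\{\phi,g\}}_k(s,kt)=-\tfrac12\sum_{\ell\in\{\pm1\}}\ell k\,(t-s)\,\zeta_\ell(s)\,\hat g_{k-\ell}(s,kt-\ell s),
\]
so only two interactions occur (this simplicity, forced by $P=\cos$, is what makes Sobolev regularity sufficient). The non-resonant term $\ell=-k$ gives $\hat g_{2k}(s,k(t+s))$, whose argument is always large and hence strongly decaying, and is harmless. The dangerous term is the resonant one $\ell=k$, namely $\hat g_0(s,k(t-s))$, whose argument vanishes as $s\to t$. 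Here I would split the integral at $s=t/2$. For $s\in[0,t/2]$ the argument $k(t-s)$ is comparable to $t$, so I use $\|g(s)\|_{\Hc^s}$ with Lemma~\ref{lememb}, $|\hat g_0(s,k(t-s))|\lesssim \langle t-s\rangle^{-s}\|g(s)\|_{\Hc^s}\lesssim R\,\langle t\rangle^{-s}\langle s\rangle^3$, and time integrability absorbs the growth $\langle s\rangle^3$. For $s\in[t/2,t]$ the factor $\langle s\rangle^{-(s-1)}$ coming from $\zeta_k$ already produces the full decay $\langle t\rangle^{-(s-1)}$, so I use only the \emph{bounded} norm $\|g(s)\|_{\Hc^{s-4}}$, paying four derivatives to buy the local integrability $\int_{t/2}^t (t-s)\langle t-s\rangle^{-(s-4)}\,\dd s<\infty$ (this uses $s\ge7$). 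Both regimes contribute $\lesssim \eps\, R^2\,\langle t\rangle^{-(s-1)}$, so the Penrose inversion gives $\langle t\rangle^{s-1}|\zeta_k(t)|\lesssim R_0+\eps R^2$. This is exactly why $Q_{T,s}$ must simultaneously carry the growing $\Hc^s$ norm and the bounded $\Hc^{s-4}$ norm.

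Finally I would close the two energy estimates. Pairing \eqref{eq:vp3} with $g$ in $\Hc^n$, the quadratic term $\eps\{\phi,g\}$ is handled through the antisymmetry of the bracket: at top order $\langle\{\phi,\partial^\alpha g\},\partial^\alpha g\rangle=0$ because $\{\phi,\cdot\}$ is transport along the divergence-free field $(-\partial_v\phi,\partial_x\phi)$, so only lower-order commutators and the mild error created by the weight $(1+|v|^2)^{m_0}$ survive (the latter involves only $\partial_x\phi$, hence carries no power of $t$), and no derivative is lost. Using $|\partial_x^p\partial_v^q\phi|\lesssim \langle t\rangle^{q}\sup_k|\zeta_k|$ and the decay of $\zeta_k$, the linear forcing $\{\phi,\eta\}$ contributes $\lesssim R_0\langle t\rangle$ in $\Hc^s$ (integrating to $\langle t\rangle^2$) while the commutator from $\eps\{\phi,g\}$ contributes $\lesssim \eps R_0R\,\langle t\rangle^{2}\|g\|_{\Hc^s}$ (after bounding the low factor by $\|g\|_{\Hc^1}\le\|g\|_{\Hc^{s-4}}\lesssim R$), integrating to $\langle t\rangle^3$; this yields $\|g(t)\|_{\Hc^s}\lesssim R_0\langle t\rangle^3(1+\eps R)$. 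The same computation in $\Hc^{s-4}$ gives a forcing $\lesssim (R_0+\eps R_0R)\langle t\rangle^{-2}$, which is integrable in time, so $\|g(t)\|_{\Hc^{s-4}}$ stays bounded and, in fact, $g(t)$ is Cauchy in $\Hc^{s-4}$, which is the scattering statement. Collecting the three bounds gives $Q_{T,s}(g)\le C_0R_0+\eps\,C_1(R)$ on $[0,T^*]$; choosing $R$ a large multiple of $R_0$ and then $\eps_0$ small enough that $\eps\,C_1(R)\le R/4$ improves the a priori bound to $Q_{T,s}(g)\le R/2$, closing the bootstrap and proving the theorem for all $T\ge0$.
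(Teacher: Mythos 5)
Your proposal is correct and takes essentially the same route as the paper: you derive the Volterra equation \eqref{zetanp1} for $\zeta_k$ and invert it via the Penrose condition ${\bf (H)}$ (the paper's Lemma \ref{lemvolterra}), treat the non-resonant mode $\ell=-k$ and the resonant mode $\ell=k$ with the split at $\sigma=t/2$, trading the $\langle t\rangle^3$-growing $\Hc^s$ norm against the bounded $\Hc^{s-4}$ norm exactly as in Proposition \ref{propzeta} (including the use of $s\geq 7$ in the near-resonant region), and close with energy estimates exploiting the divergence-free Hamiltonian structure and the commutator count of Lemma \ref{lemcom} (your heuristic $|\partial_x^p\partial_v^q\phi|\lesssim\langle t\rangle^q\sup_k|\zeta_k|$), followed by the same continuation argument. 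The one compression worth noting is that you assert the weighted $L^\infty$-in-time resolvent bound for the Volterra equation as a known consequence of ${\bf (H)}$, whereas the paper must actually prove it (Lemma \ref{lemvolterra}: kernel decay from $(1+v^2)\eta\in\Hc^5$, high/low frequency cutoffs, a Paley--Wiener argument, and an induction on the polynomial weight $\gamma$), this being the main technical ingredient rather than an off-the-shelf fact.
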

As a consequence, we obtain the following scattering result:
\begin{corollary}
\label{Landau}
Under the assumption of Theorem \ref{maintheo}, there exists a constant $C$ and $g^\infty(x,v) \in \Hc^{s-4}$ such that for all $r \leq s - 4$ and $r \geq 1$, 
\begin{equation}
\label{scat}
\forall\, t \geq 0, \quad \Norm{g(t,x,v) - g^{\infty}(x,v)}{\Hc^{r}}  \leq \frac{C}{\langle t \rangle^{s - r-3}}. 
\end{equation}
\end{corollary}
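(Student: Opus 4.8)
The plan is to deduce the scattering from the integrability in time of $\partial_t g$ in the relevant norms, the decay rate being dictated directly by Theorem \ref{maintheo}. For every $1\leq r\leq s-4$ I shall show that $\tau\mapsto\|\partial_\tau g(\tau)\|_{\Hc^r}$ is integrable on $[0,\infty)$; then $g(t)$ is Cauchy in $\Hc^r$ as $t\to\infty$ and one may set
$$g^\infty = g(0)+\int_0^\infty\partial_\tau g(\tau)\,\dd\tau,$$
the convergence holding in particular in $\Hc^{s-4}$, so that $g^\infty\in\Hc^{s-4}$. The rate \eqref{scat} then follows from
$$\|g(t)-g^\infty\|_{\Hc^r} = \Big\|\int_t^\infty\partial_\tau g(\tau)\,\dd\tau\Big\|_{\Hc^r}\leq\int_t^\infty\|\partial_\tau g(\tau)\|_{\Hc^r}\,\dd\tau,$$
once the pointwise-in-time decay of $\|\partial_\tau g\|_{\Hc^r}$ is established.

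To estimate $\partial_t g = \{\phi(t,g),\eta\}+\eps\{\phi(t,g),g\}$ in $\Hc^r$, I would exploit the explicit structure of $\phi$ recalled after \eqref{eq:zetak}, namely $\phi(t,g) = \frac12\sum_{k\in\{\pm1\}}e^{ikx}e^{iktv}\zeta_k(t)$. Differentiating this plane wave, each $\partial_x$ produces a bounded factor while each $\partial_v$ produces a factor $ikt$; hence $\partial_x^a\partial_v^b\phi$ is a bounded function of $(x,v)$ with $\|\partial_x^a\partial_v^b\phi\|_{L^\infty}\lesssim\langle t\rangle^b\sup_k|\zeta_k(t)|\lesssim\langle t\rangle^{b-(s-1)}$, using the bound $|\zeta_k(t)|\leq R\langle t\rangle^{-(s-1)}$ furnished by $Q_{T,s}(g)\leq R$. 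Since one factor in each Poisson bracket is such a bounded multiplier, I would estimate the brackets by expanding $\partial_x^p\partial_v^q$ via Leibniz, keeping the derivative of $\phi$ in $L^\infty$ and the derivative of $\eta$ or $g$ in the weighted $L^2$ norm (so the weight $(1+|v|^2)^{m_0}$ is simply carried by the $g$- or $\eta$-factor, the plane wave having modulus one). Using $\partial_x\eta\equiv0$ for the linear term, this produces a sum of contributions of the form $\langle t\rangle^{b}\sup_k|\zeta_k|$ times $\|g\|_{\Hc^{m}}$ (or $\|\eta\|_{\Hc^{m}}$) with $m\leq r+1$, where $b$ counts the $v$-derivatives falling on $\phi$.

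The decisive point is how to distribute the derivatives. In the nonlinear term $\eps\{\phi,g\}$ the worst factor is $\partial_v\phi$, carrying an extra power of $t$; but I would throw as many derivatives as possible onto $\phi$, so that in the term of maximal $t$-growth the function $g$ appears only through $\|g\|_{\Hc^1}\leq R$, which is bounded uniformly in time since $1\leq s-4$. That term is bounded by $\eps\langle t\rangle^{r+1}\sup_k|\zeta_k|\,\|g\|_{\Hc^1}\lesssim\eps\langle t\rangle^{-(s-r-2)}$, and every other splitting either carries fewer powers of $t$ or involves $\|g\|_{\Hc^{m}}$ with $m\leq s-4$, again bounded by $R$; the single borderline term, with $m=r+1=s-3$ and occurring only at $r=s-4$, is controlled by interpolation between $\Hc^{s-4}$ and $\Hc^s$ (using $\|g\|_{\Hc^s}\lesssim\langle t\rangle^3$), which yields a strictly faster decay once $s\geq7$. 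The linear term $\{\phi,\eta\}=\partial_x\phi\,\partial_v\eta$ is handled identically and is subdominant, of size $\lesssim\langle t\rangle^{-(s-r-1)}$, using $\eta\in\Hc^{s+4}$. Altogether $\|\partial_t g(t)\|_{\Hc^r}\lesssim\langle t\rangle^{-(s-r-2)}$, which is integrable precisely because $r\leq s-4$ forces $s-r-2\geq2>1$; integrating from $t$ to $\infty$ gives the claimed rate $\langle t\rangle^{-(s-r-3)}$ and hence \eqref{scat}.

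The main obstacle is exactly the tension created by filtering out the free transport in \eqref{eq:vp3}: the $v$-derivatives of $\phi$ generate the growing factors $\langle t\rangle^{r+1}$, and the whole argument closes only because these are more than compensated by the strong decay $|\zeta_k(t)|\lesssim\langle t\rangle^{-(s-1)}$ provided by Theorem \ref{maintheo}. Keeping track of which norm of $g$ must absorb the top derivative — and ensuring it is a bounded rather than a growing norm — is the delicate bookkeeping step; the interpolation needed at the endpoint $r=s-4$ is the only place where the growing norm $\|g\|_{\Hc^s}$ enters, and the hypothesis $s\geq7$ is what keeps it harmless.
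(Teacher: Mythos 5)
Your proposal is correct and is essentially the paper's own proof: both define $g^\infty$ as $g(0)$ plus the time integral of $\partial_t g=\{\phi(t,g),\eta+\eps g\}$, bound the tail $\int_t^\infty\|\partial_\tau g\|_{\Hc^r}\,\dd\tau$ using the decay $|\zeta_k(\tau)|\lesssim\langle\tau\rangle^{-(s-1)}$ against the $\langle\tau\rangle^{r+1}$ growth from $v$-derivatives of $\phi$, and use the same interpolation $\|g\|_{\Hc^{s-3}}\lesssim\|g\|_{\Hc^{s-4}}^{3/4}\|g\|_{\Hc^{s}}^{1/4}\lesssim\langle\tau\rangle^{3/4}$ at the borderline $r=s-4$. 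The only cosmetic difference is that you re-derive the bracket estimate directly from the plane-wave form of $\phi$ via Leibniz in physical space, whereas the paper simply invokes its already-proved commutator estimate \eqref{com2} of Lemma \ref{lemcom}.
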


The consequence of such results is the following nonlinear Landau damping effect: as $g(t,x,v)$ is bounded in $\Hc^{s-4}$, the solution $f(t,x,v) = \eta(v) + \varepsilon r(t,x,v) = \eta(v) + \varepsilon g(t,x-tv,v)$ satisfies 
$$
\forall\, n \in \Z^*, \quad \forall \xi \in \R,\quad \forall\, \alpha + \beta  = s-4, \quad  |\hat f_n(t,\xi) | = \varepsilon|Ê\hat g_n(t,\xi + nt )  |\leq \frac{C \varepsilon}{\langle \xi + nt \rangle^{\alpha} \langle n \rangle^\beta}. 
$$
The last estimate being a consequence of  the elementary embedding  Lemma \ref{lememb}.
 This yields that  for every $n \neq 0$, $ \hat f_n(t,\xi)$ tends to zero with a polynomial rate.
 
Moreover,  by setting 
$$
\eta^\infty(v) := \eta(v) + \frac{\varepsilon}{2\pi}\int_\T g^{\infty}(x,v) \dd x, 
$$
we have by the previous corollary (and again  Lemma \ref{lememb}) that  for $r \leq s - 4$, 
$$
\forall\, \xi \in \R, \quad 
|\hat f_0(\xi) - \hat \eta^{\infty}_0(\xi) | \leq \frac{C}{\langle \xi \rangle^{r} \langle t \rangle^{s- r - 3}}
$$
In other words,  $f(t,x,v)$ converges weakly towards $\eta^\infty(v)$. 
\bigskip

The remaining of the paper is devoted to the proof of Theorem \ref{maintheo}. We shall obtain Corollary \ref{Landau} in section \eqref{ouf} as an easy consequence.
 As pointed out in \cite{Mouhot-Villani} the control of the ``plasma echoes" that can be seen as  kind of resonances  is crucial to prove nonlinear Landau damping. These resonances
  occur when $nt= k\sigma$ in the last  integral term of \eqref{zetanp1}. The main structural property of the Vlasov-HMF model that makes possible the following short  proof of nonlinear
  Landau damping in Sobolev spaces is that the resonances are easy to analyze, the only possibility is when $n=k =  \pm 1$ and  $t = \pm \sigma$.  Moreover, the structure of the nonlinearity
   then allows  to control it  without loss of decay.  Making an analogy with  dispersive equations (see \cite{Klainerman} for example), the nonlinearity of  the Vlasov-HMF model could be thought  as a  nonlinearity with null structure. 
    Our approach also allows to handle the case of a kernel with a finite number of modes  which  allows more resonances, we briefly sketch the modification in section \ref{sectionfinite}. Nevertheless, it is still unclear
     if this can be done for the Vlasov-Poisson equation.


\section{A priori estimates}

%
%
%

In this section, we shall study a priori estimates for the solution of \eqref{eq:vp3}.
 Let us fix $s \geq 7$ and introduce the  weighted norms:
 \begin{equation}
 \label{eq:NetM}  N_{T,s}(g)= \sup_{t \in [0,  T]}  {\| g(t) \|_{\Hc^s} \over \langle t \rangle^3}, \quad   M_{T, \gamma}(\zeta)= \sup_{t \in [0,T]} \sup_{k \in \{\pm 1\}} \langle t \rangle^\gamma |\zeta_{k}(t) |
 \end{equation}
 so that 
  \begin{equation}
  \label{Qdef}  Q_{T,s}(g)=  N_{T,s}(g) + M_{T, s-1}(\zeta) + \sup_{[0, T ]} \|g(t) \|_{\Hc^{s-4}}.
  \end{equation}
   Let us take  $R_{0}>0$ such that $Q_{0,s} (g) \leq R_{0}$. Our aim is to prove that when $\eps$ is sufficiently small we  can choose $R$ so that we have
   $$ Q_{T, s}(g) \leq  R$$  for every $T \geq 0$.
   
   In the following a priori estimates, $C$ stands for a number which may change from line to line and which is independent of $R_{0}$, $R$, $\eps$ and $T$.
   
   We shall  make constant use of the following elementary  lemma.
\begin{lemma}
\label{lememb}
For every $\alpha, \, \beta, \, n\, \in \mathbb{N}$ with  $\alpha + \beta = n$ we have the following inequality: 
\begin{equation}
\label{emb1}
\forall\, k \in \Z, \quad \forall\, \xi \in \R, \quad 
|\hat{f}_k(\xi)| \leqslant 2^{n/2}C(m_0) \langle k\rangle^{- \alpha} \langle \xi\rangle^{- \beta} \Norm{f}{\Hc^{n}},
\end{equation}
where $C(m_0)$ depends only on $m_0$ and where $\langle x \rangle = (1 + |x|^2)^{1/2}$ for $x \in \R$. 
\end{lemma}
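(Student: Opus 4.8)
The plan is to produce the two decaying factors $\langle k\rangle^{-\alpha}$ and $\langle\xi\rangle^{-\beta}$ by transferring them, as \emph{positive} powers, onto derivatives of $f$, and then to estimate each resulting Fourier coefficient pointwise by the total mass of the corresponding function. First I would use the elementary multiplier identities $\widehat{\partial_x^a\partial_v^b f}_k(\xi)=(ik)^a(i\xi)^b\,\hat f_k(\xi)$, valid for all integers $a,b\geq 0$ (integration by parts in $x$ on $\T$ and in $v$ on $\R$), so that $|k|^a|\xi|^b\,|\hat f_k(\xi)|=|\widehat{\partial_x^a\partial_v^b f}_k(\xi)|$. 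Bounding a Fourier coefficient by the $L^1$ mass then gives, for every $k\in\Z$ and $\xi\in\R$, the inequality $|\widehat{\partial_x^a\partial_v^b f}_k(\xi)|\leq \frac{1}{2\pi}\,\|\partial_x^a\partial_v^b f\|_{L^1(\T\times\R)}$.

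The next step is the embedding $\Hc^0\hookrightarrow L^1$, which is exactly where the weight $(1+|v|^2)^{m_0}$ and the hypothesis $m_0>1/2$ come in. Writing $|g|=(1+|v|^2)^{m_0/2}|g|\cdot(1+|v|^2)^{-m_0/2}$ and applying Cauchy--Schwarz in $(x,v)$, I obtain $\|g\|_{L^1}\leq C_1(m_0)\,\Norm{g}{\Hc^0}$ with $C_1(m_0)^2=2\pi\int_\R(1+v^2)^{-m_0}\,\dd v$, the integral being finite precisely because $2m_0>1$. Applying this with $g=\partial_x^a\partial_v^b f$ and noting that the term $\int_{\T\times\R}(1+|v|^2)^{m_0}|\partial_x^a\partial_v^b f|^2\,\dd x\,\dd v$ occurs in the definition \eqref{defsob} of $\Norm{f}{\Hc^n}^2$ as soon as $a+b\leq n$, I get the uniform bound $|k|^a|\xi|^b\,|\hat f_k(\xi)|\leq \frac{C_1(m_0)}{2\pi}\,\Norm{f}{\Hc^n}$ for all $a\leq\alpha$ and $b\leq\beta$, since then $a+b\leq\alpha+\beta=n$.

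Finally, to replace the bare monomials $|k|^a|\xi|^b$ by the regularized brackets $\langle k\rangle^\alpha\langle\xi\rangle^\beta$, I would expand $\langle k\rangle^{2\alpha}\langle\xi\rangle^{2\beta}=(1+k^2)^\alpha(1+\xi^2)^\beta=\sum_{a=0}^\alpha\sum_{b=0}^\beta\binom{\alpha}{a}\binom{\beta}{b}k^{2a}\xi^{2b}$ and multiply through by $|\hat f_k(\xi)|^2$. Each summand equals $\binom{\alpha}{a}\binom{\beta}{b}\big(|k|^a|\xi|^b\,|\hat f_k(\xi)|\big)^2$, which by the previous step is at most $\binom{\alpha}{a}\binom{\beta}{b}\big(C_1(m_0)/2\pi\big)^2\Norm{f}{\Hc^n}^2$; summing the binomial coefficients contributes the factor $\sum_a\binom{\alpha}{a}\sum_b\binom{\beta}{b}=2^\alpha 2^\beta=2^n$. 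Taking square roots yields \eqref{emb1} with $C(m_0)=C_1(m_0)/2\pi$. There is no genuinely hard step here: the whole content is the weighted Cauchy--Schwarz that converts $L^1$ into $\Hc^0$, whose validity hinges on the integrability condition $m_0>1/2$, together with the bookkeeping of the binomial expansion, which is what generates the constant $2^{n/2}$ and allows the passage from the raw powers to the brackets $\langle\cdot\rangle$.
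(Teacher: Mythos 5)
Your proof is correct and follows essentially the same route as the paper: both rest on the multiplier identity $|k|^a|\xi|^b|\hat f_k(\xi)|=|\widehat{\partial_x^a\partial_v^b f}_k(\xi)|$ combined with the weighted Cauchy--Schwarz embedding $\Hc^0\hookrightarrow L^1$, which uses $m_0>1/2$ exactly as you say. The only (cosmetic) difference is the final bookkeeping step: you pass from monomials to the brackets $\langle k\rangle^\alpha\langle\xi\rangle^\beta$ via a binomial expansion of $(1+k^2)^\alpha(1+\xi^2)^\beta$, while the paper splits into the cases $k=0$ or $|\xi|\leq 1$ versus $|x|>1$ using $\langle x\rangle\leq 2^{1/2}|x|$; both yield the factor $2^{n/2}$.
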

\begin{proof}
We have by  using the  Cauchy-Schwarz inequality that 
\begin{eqnarray*}
\big| k^\alpha \xi^\beta  \hat{f}_k(\xi) \big| &=&  \frac{1}{2\pi} \left|\int_{\T \times \R} \partial_x^\alpha \partial_v^\beta f(x,v) e^{-ikx} e^{-iv\xi} \dd x  \dd v\right| \\
&\leqslant& C  \Norm{f}{\Hc^n} \Big( \int_\R (1 + |v|^2)^{-m_0} \dd v \Big)^{1/2}. 
\end{eqnarray*}
The previous inequality with $\alpha = \beta = 0$ yields the result when $k = 0$ or $|\xi | \leqslant 1$ and we conclude by using 
$\langle x \rangle \leqslant 2^{\alpha/2} |x|^2$ for $|x| > 1$. 
\end{proof}
%

\subsection{Estimate of $M_{T, s-1}(\zeta)$ }
Towards the proof of Theorem \ref{maintheo}, we shall first estimate $\zeta_{k}(t)$, $k=\pm1$.
 \begin{proposition}
 \label{propzeta}
 Assuming that $\eta \in \Hc^{s+2}$ verifies the assumption {\bf (H)}, then there exists $C>0$ such that for every $T>0$, every  solution of \eqref{eq:vp3}  such that
 $ Q_{T, s} (g) \leq R$ enjoys the estimate
 \begin{equation}
 \label{youyou} 
 M_{T, s-1}(\zeta) \leq C \big(  R_{0} + \eps R^2 \big).
 \end{equation}
 \end{proposition}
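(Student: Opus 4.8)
The plan is to turn the defining identity \eqref{eq:zetak} into a closed scalar Volterra equation for $\zeta_n$, $n=\pm1$, and then to invert the convolution by $K(n,\cdot)$ using the Penrose bound in {\bf (H)}. First I would take the $n$-th $x$-Fourier coefficient and the $v$-Fourier transform of \eqref{eq:vp3}, integrate in time and evaluate at the frequency $\xi=nt$. Since $\eta=\eta(v)$ is $x$-independent, $\{\phi,\eta\}=\partial_x\phi\,\eta'$ keeps only the modes $n=\pm1$ and, at $\xi=nt$, its time integral is exactly $\int_0^t K(n,t-\sigma)\zeta_n(\sigma)\,\dd\sigma$ (this is where the definition of $K$, the values $p_{\pm1}=1/2$, and $\widehat{\eta'}_0(\xi)=i\xi\hat\eta_0(\xi)$ are used). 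This yields
$$
\zeta_n(t)=\hat g_n(0,nt)+\int_0^t K(n,t-\sigma)\zeta_n(\sigma)\,\dd\sigma+\eps\int_0^t NL_n(\sigma,nt)\,\dd\sigma .
$$

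For the nonlinear term I would compute $\widehat{\{\phi,g\}}_n$ directly from \eqref{eq:phi}. The key algebraic point is a cancellation (a ``null structure''): with $\phi=\tfrac12\sum_{k=\pm1}e^{ikx}e^{iktv}\zeta_k$, the pieces $\partial_x\phi\,\partial_v g$ and $\partial_v\phi\,\partial_x g$ combine into
$$
\widehat{\{\phi,g\}}_n(t,\xi)=-\tfrac12\,(\xi-nt)\sum_{k=\pm1}k\,\zeta_k(t)\,\hat g_{n-k}(t,\xi-kt),
$$
the potentially large factor $t(n-k)$ being absorbed exactly. Evaluating at $\xi=nt$ turns the prefactor into $n(t-\sigma)$ and leaves only $\hat g_0(\sigma,n(t-\sigma))$ (diagonal, $k=n$) and $\hat g_{2n}(\sigma,n(t+\sigma))$ (off-diagonal, $k=-n$); this is the structural fact announced in the introduction, namely that the only resonance is $\sigma=t$ in the diagonal term.

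I would then estimate the two sources in the weighted norm. For the data term, Lemma \ref{lememb} with $\alpha=1,\ \beta=s-1$ gives $|\hat g_n(0,nt)|\le C\langle t\rangle^{-(s-1)}\Norm{g(0)}{\Hc^s}\le CR_0\langle t\rangle^{-(s-1)}$. For the off-diagonal term, using $|\zeta_{-n}(\sigma)|\le R\langle\sigma\rangle^{-(s-1)}$, the top norm with $\Norm{g(\sigma)}{\Hc^s}\le R\langle\sigma\rangle^3$ in Lemma \ref{lememb}, the null factor bound $(t-\sigma)\le\langle t+\sigma\rangle$, and $\langle t+\sigma\rangle\ge\langle t\rangle$, the remaining integral $\int_0^t\langle\sigma\rangle^{3-(s-1)}\dd\sigma=\int_0^t\langle\sigma\rangle^{-(s-4)}\dd\sigma$ converges for $s\ge7$, giving $C\eps R^2\langle t\rangle^{-(s-1)}$. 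The diagonal term is the resonant one and requires splitting $[0,t]=[0,t/2]\cup[t/2,t]$: on $[t/2,t]$, where $\langle\sigma\rangle\sim\langle t\rangle$, I use the bounded norm $\Hc^{s-4}$; on $[0,t/2]$, where $\langle t-\sigma\rangle\sim\langle t\rangle$, I use $\Hc^s$ and absorb the growth $\langle\sigma\rangle^3$ against the convergent $\int\langle\sigma\rangle^{-(s-4)}\dd\sigma$. Both pieces again give $C\eps R^2\langle t\rangle^{-(s-1)}$, so the whole nonlinear source is $O(\eps R^2\langle t\rangle^{-(s-1)})$.

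Finally I would invert the scalar Volterra equation. The bound $\inf_{\mathrm{Im}\,\tau\le0}|1-\hat K(n,\tau)|\ge\kappa$ in {\bf (H)}, together with the decay of $K(n,\cdot)$ coming from $\eta\in\Hc^{s+2}$ (so that $\hat\eta_0$, and hence $K$, decays faster than $\langle t\rangle^{-(s-1)}$), should guarantee a resolvent kernel mapping functions bounded by $B\langle t\rangle^{-(s-1)}$ to functions bounded by $CB\langle t\rangle^{-(s-1)}$; applying it with $B=C(R_0+\eps R^2)$ yields \eqref{youyou}. I expect the main obstacle to be precisely this weighted Volterra inversion: one must show, by a Paley--Wiener argument using the analytic continuation of $1-\hat K$ to $\mathrm{Im}\,\tau\le0$ (permitted by the localization of $\eta$ in {\bf (H)}), that the resolvent preserves the polynomial rate $\langle t\rangle^{-(s-1)}$ and not merely boundedness, since it is exactly this rate that must be fed back consistently into the nonlinear estimate.
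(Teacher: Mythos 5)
Your reduction and your treatment of the nonlinear sources are exactly those of the paper: the Fourier computation (including the null-structure cancellation of the factor $t(n-k)$, leaving the prefactor $(\xi-nt)$) reproduces \eqref{zetanp1}; your off-diagonal term is the paper's $F^1_n$, estimated with the same bookkeeping ($(t-\sigma)\langle t+\sigma\rangle^{-s}\leq \langle t\rangle^{-(s-1)}$ and convergence of $\int\langle\sigma\rangle^{-(s-4)}\,\dd\sigma$); and your splitting of the diagonal, resonant term at $\sigma=t/2$, using the $\Hc^s$ norm with its $\langle\sigma\rangle^3$ growth on $[0,t/2]$ and the bounded $\Hc^{s-4}$ norm on $[t/2,t]$, is precisely the paper's decomposition $F^2_n=I^1_n+I^2_n$, with the same constraint $s\geq 7$. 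Up to this point the proposal is correct and matches the paper step for step.

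The gap is the linear step you describe with ``should guarantee'': the statement that the Volterra inversion preserves the rate $\langle t\rangle^{-(s-1)}$ is exactly the paper's Lemma \ref{lemvolterra}, it is the most delicate part of the whole proposition, and you do not prove it --- you yourself flag it as the main obstacle. The tools you invoke (analytic continuation of $1-\hat K$ under {\bf (H)} and a Paley--Wiener argument) deliver only the causality of the solution of \eqref{eqvolterra2} and $L^2$-type information, not the uniform weighted $L^\infty$ bound $M_{T,\gamma}(\zeta)\leq C\,M_{T,\gamma}(F)$. The paper's proof requires genuinely additional ideas: the derivative bounds $|\partial_\tau^\alpha \hat K(\tau)|\leq C\langle\tau\rangle^{-2}$ for $\alpha\leq 2$ (exploiting that $t\,\hat\eta_0(t)$ vanishes at $t=0$; note $K$ is continuous but not $C^1$ at $t=0$, so $\hat K$ has only quadratic decay); a time-frequency splitting $y=y^l+y^h$ in which the high-frequency part of the kernel, $(1-\chi_R(\partial_t))K$, is made small in $L^1$ so the convolution can be absorbed, while the low-frequency part is inverted via the $L^1$ kernel with Fourier transform $\chi_{2R}(\tau)/(1-\hat K(\tau))$, using {\bf (H)}; and finally an induction on the weight exponent through the identity $ty=K*(ty)+(tK)*y+tF$, which is where the hypothesis $\eta\in\Hc^{\gamma+3}$ enters. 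Your alternative suggestion --- a resolvent kernel $\Rc$ with $\hat{\Rc}=\hat K/(1-\hat K)$ mapping $\langle t\rangle^{-(s-1)}$-decaying sources to themselves --- could in principle work, but it requires proving that $\Rc$ itself decays polynomially at the rate $s-1$, i.e.\ integrable bounds on $s-1$ $\tau$-derivatives of $\hat K/(1-\hat K)$ together with causality, and this must be checked against the limited smoothness of $K$ at $t=0$; none of this is supplied. As written, the proof is therefore incomplete precisely at the weighted Volterra inversion, while all the surrounding nonlinear estimates are sound.
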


\begin{proof}
 We first note that in Fourier space, the equation \eqref{eq:vp3} can be written after integration in time, 
\begin{multline}
\label{fourier1}
\hat g_n(t,\xi) = \hat g_n(0,\xi)  +  \int_{0}^t p_n \zeta_n(\sigma) \hat \eta_0(\xi - n \sigma)( n^2 \sigma - n \xi)Ê\dd \sigma
 \\
+ \varepsilon \sum_{k \in \Z } p_k \int_0^t \zeta_k(\sigma)  \hat g_{n-k}(\sigma,\xi - k\sigma) (  n k \sigma - k \xi)Ê\dd \sigma, 
\end{multline}
for all $(n,\xi) \in \Z \times \R$, 
with $p_k = \frac12$ for $k \in \{\pm 1\}$ and $p_k = 0$ for $k \neq \pm 1$, and where the $\zeta_k(t)$ are defined by \eqref{eq:zetak}. 
Setting $\xi = n t$ in \eqref{fourier1}, the equation satisfied by $(\zeta_n(t))_{n=\pm1}$ can be written under the almost closed form 
\begin{multline}
\label{zetanp1}
\zeta_n(t) = \hat g_n(0,nt) -  \int_{0}^t p_n \zeta_n(\sigma)  \hat \eta_0(n(t - \sigma)) n^2 ( t- \sigma )Ê\dd \sigma\\
- \varepsilon   \sum_{k \in \{ \pm 1\}} p_k \int_0^t \zeta_k(\sigma)  \hat g_{n-k}(\sigma, nt - k\sigma) k n(t-\sigma) Ê\dd \sigma. 
\end{multline}

To study the equation \eqref{zetanp1}, we shall first consider the corresponding  linear equation, that is to say that we  shall first see
 \begin{equation}
 \label{Fdef} F_{n}(t):=   \hat g_n(0,nt)- \varepsilon   \sum_{k \in \{ \pm 1\}} p_k \int_0^t \zeta_k(\sigma)  \hat g_{n-k}(\sigma, nt - k\sigma) k n(t-\sigma) Ê\dd \sigma
 \end{equation}
  as a given source term and we shall study the linear integral equation
 \begin{equation}
 \label{volterra1}
  \zeta_{n} (t) =  \int_{0}^t K(n, t-\sigma) \zeta_{n}(\sigma) \,\dd\sigma + F_{n}(t) \quad n= \pm 1
 \end{equation}
 where the kernel $K(n,t)$ has been introduced in section \ref{sectionpenrose}.
 
  For this linear equation, we have the estimate:
  \begin{lemma}
  \label{lemvolterra}
 Let $\gamma \geq 0$, and  assume that $\eta \in \Hc^{\gamma+3}$ satisfies ${\bf (H)}$. Then,
    there exists $C>0$ such   for every $T \geq 0$, we have
   $$ M_{T, \gamma}( \zeta) \leq C M_{T, \gamma}( F).$$
     \end{lemma}

Let us postpone the proof of the Lemma and finish the proof of Proposition \ref{propzeta}.

 From the previous Lemma and \eqref{emb1}, we first  get that
 \begin{equation}
 \label{est1:1} M_{T, s-1}(\zeta) \leq  C\big( \|g(0) \|_{s} + \eps  M_{T, s-1}(F^1) + \eps  M_{T, s-1}(F^2) \big) 
 \end{equation}
  with 
 where $F^1$ corresponds to the term with $k= - n$ in \eqref{Fdef} and $F^2$ corresponds to the term with $k=n$, hence
 \begin{align*}
 &  F^1_{n}(t) = - n^2 p_{-n} \int_{0}^t \zeta_{-n}(\sigma) \hat g_{2n}( \sigma, n(t+ \sigma)) (t- \sigma) \, \dd\sigma, \quad n= \pm1, \\
 &  F^2_{n}(t) = n^2 p_{n} \int_{0}^t \zeta_{n}(\sigma) \hat g_{0}( \sigma, n(t- \sigma)) (t- \sigma) \,\dd\sigma, \quad n= \pm 1.
 \end{align*}
 Let us estimate $F^1_{n}$, by using  again \eqref{emb1} and the definition \eqref{eq:NetM} of $N_{\sigma,s}$,  we get that
 $$ |F^1_{n}(t)| \leq  C \int_{0}^t {  (t- \sigma) \langle \sigma \rangle^3 M_{\sigma, s-1}(\zeta) N_{\sigma, s}(g)  \over \langle \sigma \rangle^{s-1}   \langle t+  \sigma \rangle^s} \, \dd \sigma \leq C {R^2 \over\langle  t \rangle^{s-1}} \int_{0}^{+ \infty} { 1 \over \langle \sigma \rangle^{s-4}}\, \dd \sigma \leq   C {R^2 \over\langle  t \rangle^{s-1}}$$
  provided  $s \geq  6$. This yields that for all $T \geq 0$
  $$ M_{T, s-1}(F^1) \leq CR^2.$$
  
  To estimate $F_{n}^2$, we split the integral into two parts: we write
   $$  F^2_{n}(t) =  I^1_{n}(t) + I_{n}^2 (t)$$ 
   with
    \begin{align*}
 &    I^1_{n}(t) = n^2 p_{n} \int_{0}^{t \over 2} \zeta_{n}(\sigma) \hat g_{0}( \sigma, n(t- \sigma)) (t- \sigma) \, \dd\sigma, \quad n= \pm 1, \\
  &    I^2_{n}(t) = n^2 p_{n} \int_{t \over 2}^{t } \zeta_{n}(\sigma) \hat g_{0}( \sigma, n(t- \sigma)) (t- \sigma) \, \dd\sigma, \quad n= \pm 1.
     \end{align*}
      For $I_{n}^1$, we proceed as previously, 
      $$ |I_{n}^1 (t) | \leq C R^2  \int_{0}^{t \over 2 } {  \langle \sigma \rangle^3( t- \sigma)  \over \langle \sigma \rangle^{s-1}  \langle t-\sigma \rangle^{s}} \, \dd \sigma \leq  { C R^2 \over \langle t \rangle^{s-1}} \int_{0}^{+ \infty} { 1 \over \langle
       \sigma \rangle^{s- 4}} \, \dd \sigma$$
        and hence since $s \geq 6$, we have
   $$ M_{T, s-1}(I^1) \leq C R^2.$$
     To estimate $I_{n}^2$, we shall rather use the last factor in the definition of $Q_{s, T}$ in \eqref{Qdef}. By using again \eqref{emb1},  we write
     $$ | I_{n}^2(t) | \leq \int_{t \over 2}^{ t  }  { M_{\sigma, s-1}(\zeta) \over \langle \sigma \rangle^{s-1} }   {\| g(\sigma) \|_{\Hc^{s-4}} \over  \langle  t- \sigma \rangle^{s-5}} \, \dd \sigma
      \leq  {C R^2 \over \langle t \rangle^{s-1}  }\int_{0}^{+ \infty} { 1 \over \langle \tau \rangle^{s-5}}\, \dd \sigma \leq { CR^2 \over \langle t \rangle^{s-1}}$$
      and hence since $s \geq 7,$ we find again 
     $$ M_{T, s-1}(I^2) \leq { C R^2}.$$ 
      By combining the last estimates and \eqref{est1:1}, we thus obtain \eqref{youyou}.
       This ends the proof of Proposition \ref{propzeta}.  
\end{proof}
It remains to prove Lemma \ref{lemvolterra}.
\begin{proof}[Proof of Lemma \ref{lemvolterra}]

 Let us  take $T>0$, and let us set for the purpose of the proof $K(t)= K(n,t)$, 
  $ F(t) = F_{n}(t) \mathds{1}_{ 0 \leq t  \leq T}. $ Since we  only consider the cases $n= \pm 1$, we do not write down anymore explicitly the dependence in $n$.  We consider the equation
 \begin{equation}
 \label{eqvolterra2}
   y(t)= K *  y (t)  +  F(t), \quad t \in \mathbb{R}
 \end{equation}
setting $y(t) = 0$ for $t \leq 0$. 
 Note that the solution of this equation coincides with $ \zeta_n(t)$  on $[0, T]$ since the modification of the source term for $t \geq T$ does not affect the past. 
     By taking the Fourier transform in $t$ (that we still denote by $\hat \cdot$\, ), we obtain
 \begin{equation}
 \label{voltfourier}  \hat y (\tau)=  \hat{K}(\tau) \hat y (\tau) + \hat F(\tau), \quad \tau \in \R, 
 \end{equation}
with  $ \hat{K}(\tau)= \hat{K}(n, \tau)$. Under the assumption ${\bf (H)}$, the solution of \eqref{voltfourier} is given explicitely by the formula
   \begin{equation}
   \label{vol2} \hat{y}(\tau)= { \hat{F}(\tau) \over 1 - \hat{K}(\tau)}.
   \end{equation} 
  Let us observe that  since $( 1 + v^2) \eta_{0} \in \Hc^5$,   we have by \eqref{emb1} that for $\alpha \leq 2$ and  for $t > 0$
    \begin{equation}
    \label{decvol2}| \partial_{t}^\alpha  K(t)   | \leq {C \over \langle t \rangle^4}    \in L^1(\mathbb{R}_{+}).
    \end{equation}
    Note that by definition of $K(t)$, the function $K(t)$ is continuous in $t = 0$, but not $C^1$. 
     Using an integration by parts on the definition of the Fourier transform, we  then get 
      that
     \begin{equation}
    \label{vol1}
      |  \partial_{\tau}^\alpha \hat{K}(\tau) | \leq { C \over \langle \tau \rangle^2}, \quad  \alpha  \leq 2.
      \end{equation}
 To get this, we  have used that the function $t\, \hat \eta_{0}(t)$ vanishes at zero. 
 
  By using this estimate on $\hat{K}$,  {\bf (H)}  and that  $\hat F(\tau) \in H_\tau^{1}$ (the Sobolev space in $\tau$) since $F$ is compactly supported in time, we easily
   get that $ y$ defined via its Fourier transform  by  \eqref{vol2}  belongs to $H_{\tau}^1$. This  implies that $\langle t \rangle y \in L^2$ and thus that $y \in L^1_{t}$.
  These remarks justify the  use  of the Fourier transform and that the function $y$ defined through its Fourier transform via \eqref{vol2}
   is a solution of  \eqref{eqvolterra2}. Moreover, thanks to \eqref{vol2} and {\bf (H)}, we  get that $\hat y$ can be continued 
    as an holomorphic function  in $\mathrm{Im}\, \tau \leq 0$ and  thanks to a Paley Wiener type  argument, that $y$ vanishes for $t \leq 0$.  We have thus obtained  an $L^1$ solution
     of \eqref{eqvolterra2} that vanishes  for $t \leq 0$. By a Gronwall type argument, we easily get  that there is a unique solution in this class  of \eqref{eqvolterra2} and thus
     we have obtained the expression of the unique solution.    
 
  We can thus now focus on the proof of the estimate stated in Lemma \ref{lemvolterra}.
      Note that  a $L^2$-based version of this estimate would be very easily obtained.   
The difficulty here is  to get the uniform   $L^\infty$ in  time  estimate we want to prove. 

\medskip 

   We shall first prove the estimate for $\gamma =0$.
Let us  take $\chi(\tau) \in [0, 1]$ a smooth compactly supported function  that  vanishes for $| \tau | \geq 1$ and which is equal to one for $|\tau| \leq 1/2$.   We  define $\chi_{R}(\tau)= \chi(\tau/R)$ and $\chi_R(\partial_t)$ the corresponding operator in $t$ variable corresponding to the convolution with the inverse Fourier transform of $\chi_R(\tau)$. 
  Thanks to \eqref{vol1}, we have that for $R$ large
 $$ \langle t \rangle^2 | (1- \chi_{R}(\partial_{t})) K (t) |  \leq C  \sum_{\alpha \leq 2}\|  \partial_{\tau}^\alpha ( ( 1 - \chi_{R}(\tau)) \hat{K}(\tau)) \|_{L^1(\R)}
  \leq C \int_{|\tau| \geq R/2} { 1 \over \langle \tau \rangle^2} \leq  { C \over R} $$
  and hence
  \begin{equation}
  \label{vol2'}
   \|  (1- \chi_{R}(\partial_{t} ) ) K(t) \|_{L^1(\R)} \leq {C \over R} \leq { 1 \over 2}
  \end{equation}
   for $R$ sufficiently large. This choice fixes $R$.
   
   To estimate the solution $y$ of \eqref{eqvolterra2}, we shall write that
    $$y=  \chi_{2R} (\partial_{t}) y + ( 1 - \chi_{2R}(\partial_{t}))y   = : y^l + y^h.$$
    By applying $ ( 1- \chi_{2R}(\partial_t)) $ to \eqref{eqvolterra2}, we get that
    $$
     y^h = K * y^h + ( 1 - \chi_{2R}(\partial_{t}) )  F =\big( ( 1 - \chi_{R}(\partial_{t}) K \big) * y^h +   ( 1 - \chi_{2R}(\partial_{t})  )F
    $$
     since $(1- \chi_{R})= 1$ on  the support of $1- \chi_{2R}$. Therefore, we obtain thanks to \eqref{vol2'} and the fact that $\chi_{2R}(\partial_t)$ is a convolution operator with a $L^1$ function,  that
     $$ \|y^h \|_{L^\infty} \leq  { 1 \over 2} \| y^h \|_{L^\infty} +  C \| F \|_{L^\infty}$$
      and hence
      $$ \|y^h \|_{L^\infty} \leq   2 C \|F\|_{L^\infty}.$$
       For the low frequencies, we can use directly the form \eqref{eqvolterra2} of the equation: We can write that
       $$  \hat y^l(\tau)= { \chi_{2R}(\tau)   \over 1 - \hat{K}(\tau)} \chi_{R}(\tau) \hat F(\tau).$$
        Since the denominator does not vanish thanks to  ${\bf(H) }$, we obtain again  that $y^l$ can be written as the convolution of an $L^1$ function - which is the inverse Fourier transform of $ \chi_{2R}(\tau)/( 1-  \hat{K}(\tau))$ -  by the function
         $\chi_{R}(\partial_{t}) F$ which is a convolution of $F$ by a smooth function. Thus we obtain by using again the Young inequality  that
         $$  \| y^l \|_{L^\infty} \leq  C \|F\|_{L^\infty}.$$
          Since 
          $ \|y \|_{L^\infty} \leq \| y^l \|_{L^\infty} + \|y^h\|_{L^\infty},$
           we get the  desired estimate  for $\gamma=0$.    
    To get the estimate for arbitrary $\gamma$, we can proceed by induction.  We observe that
    $$ t y(t) =  K * (t y) +  F^1$$
     with $F^1= (tK) * y + t  F$. 
     Using the result $\gamma = 0$, we obtain that $\| t  y \|_{L^\infty} \leq C \|F^1 \|_{L^\infty}$. Now 
      since $\eta_{0} \in \Hc^{\gamma + 3}$,  for $\gamma =1$, we obtain that $ tK  \in L^1$ and thus 
      $$  \|F^1 \|_{L^\infty} \leq C\big( \| t F\|_{L^\infty} +  \|y\|_{L^\infty}) \leq C \| ( 1 + t ) F\|_{L^\infty}.$$
       The higher order estimates follow easily in the same way.     
\end{proof}

\subsection{Estimate of $N_{T, s}(g)$}
\begin{proposition}
 \label{propgs}
 Assuming that $\eta \in \Hc^{s+2}$ verifies the assumption {\bf (H)}, then there exists $C>0$ such that for every $T>0$, every  solution of \eqref{eq:vp3}  such that
 $ Q_{T, s} (g) \leq R$ enjoys the estimate
   $$  N_{T, s}(g) \leq  C (R_{0}+  \eps R^2)  ( 1 + \eps R) e^{ C \eps  R}.$$
 \end{proposition}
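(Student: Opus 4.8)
The plan is to perform an energy estimate directly on the filtered equation \eqref{eq:vp3} in the space $\Hc^s$, exploiting the structure of the Poisson bracket and the explicit form \eqref{eq:phi} of the potential. Writing $\frac12 \frac{\dd}{\dd t}\Norm{g(t)}{\Hc^s}^2 = \langle \partial_t g, g\rangle_{\Hc^s}$ and using \eqref{eq:vp3}, I would split the right-hand side into the linear contribution $\langle \{\phi(t,g),\eta\}, g\rangle_{\Hc^s}$ and the nonlinear contribution $\eps\langle \{\phi(t,g),g\}, g\rangle_{\Hc^s}$. The essential observation is that $\phi(t,g) = \frac12\sum_{k\in\{\pm1\}} e^{ikx}e^{iktv}\zeta_k(t)$ is extremely simple: it involves only the two modes $k=\pm1$ and its size is governed by $\zeta_k(t)$, which we already control via $M_{T,s-1}(\zeta)$. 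Thus every time a derivative falls on $\phi$, each $\partial_v$ produces a factor of $t$ (because of the $e^{iktv}$ factor), but this is compensated by the fast decay $\langle t\rangle^{-(s-1)}$ of $\zeta_k$ coming from the Volterra analysis of the previous subsection.

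The main work is therefore to bound the two brackets in $\Hc^s$ after applying $\partial_x^p\partial_v^q$ with $|p|+|q|\le s$ and using the Leibniz rule. First I would treat the linear term $\{\phi,\eta\}$: since $\eta=\eta(v)$ depends only on $v$, the bracket reduces to $-\partial_x\phi\,\partial_v\eta$, and distributing derivatives, the worst case puts all $v$-derivatives on $\phi$, yielding a factor $\langle t\rangle^{q}$ against $|\zeta_k|\le M_{T,s-1}(\zeta)\langle t\rangle^{-(s-1)}$; since $q\le s$ and $\eta\in\Hc^{s+4}$ furnishes the needed regularity, the resulting term is integrable after pairing with $\Norm{g}{\Hc^s}$ and using the Cauchy--Schwarz inequality on the $\Hc^s$ inner product. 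For the nonlinear term $\eps\{\phi,g\}$, the same Leibniz expansion applies; the dangerous contributions are those where many $v$-derivatives hit $\phi$ (generating powers of $t$), but again each such term carries $|\zeta_k|\lesssim R\langle t\rangle^{-(s-1)}$, and the remaining derivatives on $g$ are controlled by $\Norm{g}{\Hc^s}\le R\langle t\rangle^3$ through $N_{T,s}$. The net effect is that after integrating in time the differential inequality takes the schematic form
$$
\frac{\dd}{\dd t}\Norm{g(t)}{\Hc^s} \leq C\big(M_{T,s-1}(\zeta) + \eps R\,\Norm{g(t)}{\Hc^s}\big)\,\langle t\rangle^{-\delta}
$$
for some $\delta>1$ governing integrability, plus lower-order remainders.

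The key obstacle, and the reason the weight $\langle t\rangle^3$ appears in the definition of $N_{T,s}$, is bookkeeping the powers of $t$: one must check that in every term of the Leibniz expansion the gain $\langle t\rangle^{-(s-1)}$ from $\zeta_k$ strictly dominates the loss $\langle t\rangle^{q}$ from the $v$-derivatives on $\phi$ \emph{together with} the growth $\langle t\rangle^3$ allowed for $\Norm{g}{\Hc^s}$, leaving a time-integrable factor. Granting $s\geq 7$ this is exactly what makes the estimate close. I would then invoke Proposition \ref{propzeta} to replace $M_{T,s-1}(\zeta)$ by $C(R_0+\eps R^2)$, integrate the linear part to produce the $C(R_0+\eps R^2)$ contribution, and treat the $\eps R\,\Norm{g}{\Hc^s}$ term by Gr\"onwall's lemma, which generates the factors $(1+\eps R)e^{C\eps R}$. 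Combining, I obtain
$$
N_{T,s}(g)\leq C(R_0+\eps R^2)(1+\eps R)e^{C\eps R},
$$
as claimed. The delicate point throughout is verifying the exponent arithmetic so that no term loses a net positive power of $t$; the simplicity of the kernel (only $k=\pm1$) is what prevents the ``plasma echo'' resonances from spoiling this count.
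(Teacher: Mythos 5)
Your overall strategy---an $\Hc^s$ energy estimate on \eqref{eq:vp3}, trading each $v$-derivative on $\phi$ for a power of $t$ absorbed by the decay $|\zeta_k(t)|\lesssim R\langle t\rangle^{-(s-1)}$, then Proposition \ref{propzeta} and Gr\"onwall---is indeed the paper's strategy, but as written there is a genuine gap at the decisive step. In the nonlinear term $\eps\{\phi,g\}$, the Leibniz expansion of $\partial_x^p\partial_v^q\{\phi,g\}$ with $p+q\le s$ contains the term in which \emph{all} the derivatives fall on $g$, namely $\eps\{\phi,\partial_x^p\partial_v^q g\}$; since the bracket itself carries one more derivative, this term involves $s+1$ derivatives of $g$ and is \emph{not} controlled by $\Norm{g}{\Hc^s}$, contrary to your claim that ``the remaining derivatives on $g$ are controlled by $\Norm{g}{\Hc^s}$''. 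Cauchy--Schwarz on this term loses one derivative and the estimate cannot close at fixed regularity. What is needed is the structural cancellation that the paper states explicitly: writing $\Lc_t[g]f=\{\phi(t,g),f\}$, the pairing $\langle Dg,\Lc_t[g]Dg\rangle_{L^2}$ vanishes because $\Lc_t[g]$ is transport along a divergence-free Hamiltonian vector field, so only the commutator $[D,\Lc_t[g]]g$---which carries at most $s$ derivatives of $g$ and at most one net power of $\sigma$ beyond $\sigma^{s}$, as quantified in Lemma \ref{lemcom}---remains to be estimated. Your opening phrase ``exploiting the structure of the Poisson bracket'' gestures at this, but the cancellation is never stated or used, and without it the nonlinear estimate fails.

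Two further inaccuracies are worth flagging. First, $\Hc^s$ carries the weight $(1+|v|^2)^{m_0}$, so $\Lc_t[g]$ is not skew-adjoint for the $\Hc^s$ inner product as such; the paper works instead with the Fourier multipliers $D^{m,p,q}=k^p\xi^q\partial_\xi^m$ (the $\partial_\xi^m$ encoding the weight) and absorbs the weight commutators into Lemma \ref{lemcom}, whereas your formulation via $\langle\cdot,\cdot\rangle_{\Hc^s}$ silently assumes this works out. Second, your displayed schematic inequality, with a single integrable factor $\langle t\rangle^{-\delta}$, $\delta>1$, multiplying all terms, is not what the computation yields: the worst commutator contribution is of size $m_{t,s+1}(\zeta)=\langle t\rangle^{2}m_{t,s-1}(\zeta)\lesssim \langle t\rangle^{2}M_{T,s-1}(\zeta)$, a non-integrable source whose time integration is precisely what produces the $\langle t\rangle^3$ growth in $N_{T,s}$; only the $\eps\Norm{g(t)}{\Hc^s}$ term to be handled by Gr\"onwall has the integrable coefficient $\eps R\langle t\rangle^{-(s-3)}$. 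Your bookkeeping paragraph acknowledges the $\langle t\rangle^3$ weight, but the inequality as displayed would instead imply a uniform bound on $\Norm{g(t)}{\Hc^s}$, which is not available here.
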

 
 \begin{proof}
  To prove Proposition \ref{propgs}, we shall use energy estimates.
  We set $\mathcal{L}_t[g]$ the operator
 $$\Lc_t[g] f = \{Ê\phi(t,g),f\}$$ such 
 that $g$ solves the equation 
 $$\partial_t g = \Lc_t[g]( \eta + \varepsilon g).$$
For any linear operator $D$, we thus have by standard manipulations 
 that 
\begin{eqnarray*}
\frac{\dd}{\dd t} \Norm{Dg(t)}{L^2}^2 &=& 2 \varepsilon  \langle D g(t), D( \Lc_t [g] g(t)) \rangle_{L^2} + 2 \langle Dg(t), D( \Lc_t [g](\eta) )\rangle_{L^2}\\
&=& 2\varepsilon \langle Dg(t),  \Lc_t [g] D g(t) \rangle_{L^2}  + 2\varepsilon \langle Dg(t), [D, \Lc_t[g]] g(t)\rangle_{L^2}\\
&& + 2 \langle Dg(t), D( \Lc_t [g](\eta) )\rangle_{L^2},
\end{eqnarray*}
where $[D,\Lc_t]$ denotes the commutator between the two operators $D$ and $\Lc_t$.  
The first term in the previous equality vanishes since $\mathcal{L}_{t}[g]$ is the transport operator associated with a divergence free Hamiltonian vector field.  
Consequently,  we get that  
\begin{equation}
\label{energie1}
 {\dd \over \dd t}\Norm{Dg(t)}{L^2}^2 \leqslant    2\varepsilon\ \Norm{Dg(t)}{L^2} \Norm{[D,\Lc_t[g]] g(t)}{L^2} 
+ 2 \int_0^t \Norm{ Dg(t)}{L^2}\Norm{ D ( \Lc_t [g] (\eta) ) }{L^2}. 
\end{equation}
 To get the estimates of Proposition \ref{propgs}, we shall use the previous estimates with the operator 
$D = D^{m,p,q}$ defined as the   Fourier  multiplier by  $ k^p \xi^q  \partial^m_\xi$ for  $(m,p,q) \in \mathbb{N}^{3d}$ such that $p + q \leqslant s $, $m \leqslant m_0$ and the definition \eqref{defsob} of the $\Hc^s$ norm.  
  To evaluate the right hand-side of \eqref{energie1}, we shall use
  \begin{lemma}
  \label{lemcom}
   For   $p + q \leqslant  \gamma$ and $m \leqslant m_0$, and functions $h(t)$ and $g(t)$, we have the estimates
   \begin{eqnarray}
   \label{com1}
    & &  \| \big[ D^{m,p,q} ,  \mathcal{L}_{\sigma}[g] \big] h  (\sigma)\|_{L^2} \leq C \big( m_{\sigma, \gamma + 1}(\zeta)  \|h (\sigma)\|_{\Hc^1} +  m_{\sigma, 2}(\zeta) \|h(\sigma)\|_{\Hc^\gamma}\big),  \\
  \label{com2}   & & \|D^{m,p,q} \big(   \mathcal{L}_{\sigma}[g] \big)h  (\sigma) \|_{L^2} \leq C \big(  m_{\sigma, \gamma + 1}(\zeta)  \|h (\sigma)\|_{\Hc^{1}}
  + m_{\sigma, 2}(\zeta) \|h(\sigma)\|_{\Hc^{\gamma + 1 }},
      \end{eqnarray}
      for all $\sigma$, 
   where  $\zeta$ is still defined by $\zeta_{k}(t)= \hat g_{k}(t, kt)$, $k \in \{Ê\pm 1\}$, and where
   $$  m_{\sigma, \gamma}(\zeta) = \langle \sigma \rangle^{\gamma} \Big(\sup_{k \in \{\pm 1\}}  |\zeta_k(\sigma)|\Big),$$
   with a constant $C$ depending only on $\gamma$, and in particular, does not depend on  $\sigma$.
   \end{lemma} 
  Let us finish first the proof of  Proposition \ref{propgs}.  By using the previous lemma with $\gamma = s$ and \eqref{com1} with $h= g$ and \eqref{com2}
   with $h= \eta$, we obtain from   \eqref{energie1}    that
  \begin{multline*}  { \dd \over \dd t } \|g(t) \|_{\Hc^s}^2  \leq  \langle t \rangle^2 m_{t, s-1}(\zeta) \big( \| \eta \|_{\Hc^1} + \eps  \|g(t)\|_{\Hc^1} \big)  \|g(t)\|_{\Hc^s}  \\+   { 1 \over \langle t \rangle^{s - 3}} m_{t, s-1}(\zeta)
   \| \eta\|_{\Hc^{s+1}} \| g(t) \|_{\Hc^s} 
   +  { \eps  \over \langle t \rangle^{s - 3}} m_{t, s-1}(\zeta) \|g(t) \|_{\Hc^s}^2.
   \end{multline*}
    This yields using the fact that $M_{t,\gamma}(\zeta) = \sup_{\sigma \in [0,t]} m_{\sigma,\gamma}(\zeta)$,
  $$ \|g(t)\|_{\Hc^s} \leq \|g(0)\|_{\Hc^s} + \langle t \rangle^3 M_{t, s-1}(\zeta) \big(  \|\eta\|_{\Hc^{s+1}}+ \eps R \big) + \eps R \int_{0}^t { 1 \over \langle \sigma \rangle^{s - 3}}
   \| g(\sigma) \|_{s} \, \dd\sigma$$
    for $t \in [0, T]$.
     From the Gronwall inequality, we thus obtain
     $$ \|g(t)\|_{\Hc^s} \leq  \Big(\|g(0)\|_{\Hc^s} + \langle t \rangle^3 M_{t, s-1}(\zeta) \big(  \|\eta\|_{\Hc^{s+1}}+ \eps R \big)\Big) e^{\eps R \int_{0}^{ + \infty}{ \dd \sigma \over \langle \sigma \rangle^{s-3}}}.$$
      By using Proposition \ref{propzeta}, this yields
      $$  N_{T, s}(g) \leq \Big( R_{0} +  (R_{0}+  \eps R^2)  ( C + \eps R)  \Big) e^{ C\eps  R}.$$
       This ends the proof of Proposition \ref{propgs}.  

 \end{proof}

Let us give the proof of Lemma \ref{lemcom}.
\begin{proof}[Proof of Lemma \ref{lemcom}]
 We give the proof of \eqref{com1}, the proof of the second estimate  being  slightly easier. In the Fourier side, we have   for $\mathcal{L}_{\sigma}[g] (h)$ the expression
$$
 (  \mathcal{F}{\Lc_\sigma[g] h})_n(\xi) = \sum_{k \in \{ \pm 1\} }  kp_k  \zeta_k(\sigma)  \hat h_{n-k}(\sigma,\xi - k\sigma) (  n  \sigma -  \xi).
$$
  Consequently, we obtain that
  \begin{multline*} \big(  \mathcal{F}  ( [D^{m,p,q},  {\Lc_\sigma[g]  }h ) \big))_n(\xi) = \\
    \sum_{k \in \{ \pm 1\} }
      kp_k  \zeta_k(\sigma) \Big( n^p \xi^q  \partial^m_\xi \big( \hat h_{n-k}(\sigma,\xi - k\sigma) (  n \sigma-  \xi) \big) -  \\
        \big( (n-k)^p  (\xi-  \sigma)^q   \partial_{\xi}^m\hat h_{n-k} ( \sigma,\xi - k\sigma) (  n \sigma -  \xi) \big)  \Big).
       \end{multline*}
For $k =\pm 1$, we can thus expand the above expression into a finite sum of terms under the form
$$
 I_{n}^k(\sigma, \xi)=  k p_{k} \zeta_{k} (\sigma)  k^{p_{1}}  (n-k)^{ p - p_{1} + \alpha }  \big(k \sigma \big)^{q_{1}+ \alpha} (\xi - k\sigma)^{q - q_{1} + \beta}
 \partial_{\xi}^{m_{1}} \hat h_{n-k}(\sigma,\xi - k\sigma)
$$
where 
$$0 \leq   p_{1} \leq p, \,    0  \leq  q_{1} \leq q ,  \quad  m-1 \leq m_{1} \leq m, \quad   \alpha + \beta =  m_{1} - m  + 1, \, \alpha, \, \beta \geq 0.$$
Moreover, if $m_{1}=m$, then we have $p_{1}+ q_{1}>0$.

 We have to estimate
 $ \sum_{n} \int_{\xi}  |\sum_{k \in \pm 1}I_{n}^k (\sigma, \xi) |^2 \, d\xi$ by isometry of the Fourier transform.

 We note that for a fixed $k \in \{\pm 1\}$ then for $|n - k| + | \xi - k \sigma| \leq |k| \sigma$, we have 
 $$ | I_{n}^k (\sigma, \xi) | \leq  C \sigma^{ p + q + 1 } |\zeta_{k}(\sigma)| |n-k|| \partial_{\xi}^{m_1} \hat h_{n-k} (\sigma, \xi - k \sigma) |$$
  whereas for $|n-k| + | \xi- k \sigma | \geq  |k| \sigma$, we have
  $$| I_{n}^k (\sigma, \xi) |  \leq  C  \langle \sigma \rangle^2 |\zeta_{k}(\sigma)| (|n-k| + | \xi- k \sigma |)^\gamma | \partial_{\xi}^{m_1} \hat h_{n-k} (\sigma, \xi - k \sigma) |. $$ 
 
  Consequently  by taking the $L^2$ norm, we find that
 $$ \| \sum_{k \in \pm 1}I_{n}^k(\sigma,\xi) \|_{L^2} \leq  C \big( m_{\sigma, \gamma + 1}(\zeta)  \|h(\sigma) \|_{\Hc^1} +  m_{\sigma, 2}(\zeta) \|h(\sigma)\|_{\Hc^m}\big).$$
 This ends the proof of the  Lemma.
    \end{proof}
   
   \subsection{Estimate of $\| g \|_{ \Hc^{s-4}}$}
   To close the argument, it only remains to estimate  $\| g \|_{ \Hc^{s-4}}$.

    \begin{proposition}
    \label{propgs-4}
    Assuming that $\eta \in \Hc^{s+2}$ verifies the assumption {\bf (H)}, then there exists $C>0$ such that for every $T>0$, every  solution of \eqref{eq:vp3}  such that
 $ Q_{T, s} (g) \leq R$ enjoys the estimate
    $$ \| g(t) \|_{\Hc^{s-4}}  \leq  C\big( R_{0} +  \eps R^2) e^{ C \eps R}, \quad \forall t \in [0, T].$$
    \end{proposition}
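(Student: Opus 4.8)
The plan is to run the same energy scheme that produced Proposition~\ref{propgs}, only at the lower regularity level $s-4$ in place of $s$. Concretely, I would start from the differential inequality \eqref{energie1} applied to each Fourier multiplier $D = D^{m,p,q}$ with $p+q \leq s-4$ and $m \leq m_0$, sum over these multipliers, and invoke Lemma~\ref{lemcom} with $\gamma = s-4$: the commutator estimate \eqref{com1} is used with $h = g$ to bound the $\eps$-term, and \eqref{com2} with $h = \eta$ to bound the source term $D(\Lc_t[g]\eta)$. Dividing the resulting inequality for $\|g(t)\|_{\Hc^{s-4}}^2$ by $2\|g(t)\|_{\Hc^{s-4}}$, exactly as in the proof of Proposition~\ref{propgs}, reduces the matter to a scalar Gronwall inequality for $u(t) := \|g(t)\|_{\Hc^{s-4}}$.

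The decisive point is what the $\zeta$-weights of Lemma~\ref{lemcom} produce at this lower level. Since $s - 4 \geq 1$ we have the embedding $\|g\|_{\Hc^1} \leq \|g\|_{\Hc^{s-4}}$, and using $\sup_{k}|\zeta_k(t)| \leq M_{T,s-1}(\zeta)\langle t\rangle^{-(s-1)}$ with the crude bound $M_{T,s-1}(\zeta) \leq R$ gives
$$ m_{t,s-3}(\zeta) = \langle t\rangle^{s-3}\sup_{k}|\zeta_k(t)| \leq \frac{R}{\langle t\rangle^2}, \qquad m_{t,2}(\zeta) \leq \frac{R}{\langle t\rangle^{s-3}}, $$
both integrable in time (indeed $s \geq 7$, and $s>4$ already suffices). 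For the source term I would instead insert the sharp bound $M_{T,s-1}(\zeta) \leq C(R_0 + \eps R^2)$ of Proposition~\ref{propzeta}, together with $\eta \in \Hc^{s+2} \subset \Hc^{s-3}$ to control $\|\eta\|_{\Hc^{s-3}}$ appearing in \eqref{com2}. Collecting everything, one arrives at an inequality of the form
$$ \frac{\dd}{\dd t} u(t) \leq \frac{C \eps R}{\langle t\rangle^2}\, u(t) + \frac{C(R_0 + \eps R^2)}{\langle t\rangle^2}, $$
and integrating by Gronwall, using $\int_0^{+\infty}\langle\sigma\rangle^{-2}\,\dd\sigma < \infty$ together with $u(0) \leq R_0$, yields $u(t) \leq C(R_0 + \eps R^2)\,e^{C\eps R}$, which is precisely the claim.

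I do not expect a genuine analytic obstacle here; the whole content is the gain obtained by working four derivatives below the top norm. At level $s$ the relevant weight in \eqref{com1}--\eqref{com2} is $m_{t,s+1}(\zeta) \lesssim R\langle t\rangle^2$, which \emph{grows} and is exactly what forces the $\langle t\rangle^3$ loss built into $N_{T,s}$; at level $s-4$ the same weight becomes $m_{t,s-3}(\zeta) \lesssim R\langle t\rangle^{-2}$, which is integrable and therefore gives a uniform-in-time bound. The only care needed is bookkeeping: one uses the crude bound $M_{T,s-1}(\zeta)\leq R$ in the homogeneous term to produce the exponent $e^{C\eps R}$, while using the sharp bound $C(R_0+\eps R^2)$ of Proposition~\ref{propzeta} in the inhomogeneous term to produce the prefactor $C(R_0+\eps R^2)$.
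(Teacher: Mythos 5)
Your proposal is correct and follows essentially the same route as the paper: the energy inequality \eqref{energie1} with $D^{m,p,q}$, $p+q\leq s-4$, Lemma \ref{lemcom} producing the integrable weight $m_{t,s-3}(\zeta)\leq R\langle t\rangle^{-2}$, the sharp bound $M_{t,s-1}(\zeta)\leq C(R_0+\eps R^2)$ from Proposition \ref{propzeta} in the inhomogeneous term against the crude bound $R$ in the homogeneous term, and a Gronwall argument. The only cosmetic difference is that the paper integrates the inequality for $\|g(t)\|_{\Hc^{s-4}}$ directly rather than dividing the inequality for $\|g(t)\|_{\Hc^{s-4}}^2$ by the norm, which sidesteps the harmless technicality of the norm possibly vanishing.
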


 \begin{proof}
 We use again \eqref{energie1} with $D= D^{m,p,q}$ but now with $p+q  \leq s-4$.
  By using Lemma \ref{lemcom}, we find
  \begin{equation}
  \label{gs-4scat}
   {\dd \over \dd t} \|g(t)\|_{\Hc^{s-4}}^2 \leq  m_{t, s-3}(\zeta) \big(  \| \eta\|_{\Hc^{s-3}} \|g(t)\|_{\Hc^{s-4}} +\eps \|g(t) \|_{\Hc^{s- 4}}^2  \big).\end{equation}
   This yields
   $$ \|g(t)\|_{\Hc^{s-4}} \leq \|g(0)\|_{\Hc^{s-4}}  +  \| \eta \|_{\Hc^{s-3}} M_{t, s-1}(\zeta) \int_{0}^t { 1 \over \langle \sigma \rangle^2} \, \dd\sigma + \eps M_{t, s-1}(\zeta)
    \int_{0}^t { 1 \over \langle \sigma \rangle^2}  \|g(\sigma) \|_{\Hc^{s-4}} \, \dd \sigma.$$
    By using Proposition \ref{propzeta}, we thus get
    $$ \|g(t)\|_{\Hc^{s-4}}    \leq C\big( R_{0} +  \eps R^2) + \eps R  \int_{0}^t  { 1 \over \langle \sigma \rangle^2}\|g(\sigma) \|_{\Hc^{s-4}} \, \dd \sigma.$$
    From the Gronwall inequality, we  finally find
    $$ \| g(t) \|_{\Hc^{s-4}}  \leq C\big( R_{0} +  \eps R^2) e^{ C \eps R}.$$
     This ends the proof of Proposition \ref{propgs-4}.
 
 \end{proof}
 
 \section{Proof of Theorem \ref{maintheo}}
 The proof of Theorem \ref{maintheo} follows from the a priori estimates in Propositions \ref{propzeta}, \ref{propgs} and \ref{propgs-4} and a continuation argument.
  Indeed, by combining the estimates of these three propositions, we get that
  $$ Q_{T, s}(g) \leq  C (R_{0}+  \eps R^2)  ( 1  + \eps R)  e^{ C \eps  R}$$ 
  assuming that $Q_{T, s}(g) \leq R$. Consequently, let us choose $R$ such that
    $R> C R_{0}$, then for $\eps$ sufficiently small we have $ R >C (R_{0}+  \eps R^2)  ( 1  + \eps R)  \Big) e^{ C \eps  R}$ and hence by usual continuation argument, we obtain that
    the estimate $Q_{T, s}(g) \leq R$ is valid for all times.
    
   \section{Proof of Corollary \ref{Landau}}
   \label{ouf}
     In view of \eqref{fourier1}, let us define $g^\infty(x,v)$ by
     $$
     g^\infty(x,v) = g(0,x,v) + \int_0^{+\infty} \{Ê\phi(\sigma,g),  \eta + \varepsilon g(\sigma)\} \, \dd \sigma. 
     $$
     Note that the integral is convergent in $\Hc^{s-4}$ since thanks to  \eqref{com2}, we have
     $$ \|\{Ê\phi(\sigma,g),  \eta + \varepsilon g(\sigma)\} \|_{\Hc^{s-4}} \leq  C(R)\big( {1 \over \langle \sigma \rangle^{2}} + { \langle{ \sigma} \rangle^{3\over 4} \over  \langle \sigma \rangle^{s -3}}\big).$$
     Note that for the last estimate, we have used that by interpolation
      $$ \|g\|_{\Hc^{s-3}} \leq C \|g \|_{\Hc^{s-4}}^{3 \over 4} \| g \|_{\Hc^s}^{1 \over 4} \leq C(R)   \langle \sigma \rangle^{ 3 \over 4}.$$
       From the same arguments, we also  find that
      $$ \| g(t) - g^{\infty}\|_{\Hc^{s-4}} \leq C(R) \Big( \int_{t}^{+ \infty} { 1 \over \langle \sigma \rangle^{  2  } } +  { 1 \over   \langle \sigma \rangle^{s -3 - { 3 \over 4}} } \, \dd\sigma \Big)
       \leq {C(R)  \over  \langle  t  \rangle }.
       $$ 
      In a similar way, by using again \eqref{com2}, we have for $r \leq s-4$ and $r \geq 1$, 
   $$   \| g(t) - g{\infty}\|_{\Hc^{r}} \leq C(R) \Big( \int_{t}^{+ \infty} { 1 \over \langle \sigma \rangle^{  s- r - 2  } } +  { 1 \over   \langle \sigma \rangle^{s -3}  }\, \dd\sigma \Big)
       \leq C(R) \big( { 1 \over  \langle  t  \rangle^{ s- r - 3 }  } +  { 1 \over   \langle  t  \rangle^{s - 4} } \big) \leq  {C(R)  \over  \langle  t  \rangle^{ s- r - 3 } } .$$ 
       
%
%
%
   
 \section{The case of a kernel with a finite number of modes}
 \label{sectionfinite}
  In this section, we briefly indicate the modifications in the case that in \eqref{eq:hmf1}, the kernel $P$ is defined by
  $$ P(x)= \sum_{k=1}^M p_{k} \cos(kx), $$
  for some $p_k \in \R$ and for a fixed $M$. 
   For the Penrose criterion {\bf (H)} , it suffices to consider that it holds for any $n$,  $|n| \leq M, $ $n \neq 0$.
    
    We can use the weighted norms 
  $$  Q_{T,s}(g)=  \sup_{t \in  [0, T]} { \|g(t) \|_{\Hc^s} \over \langle t \rangle^{2M+1}} + 
   \sup_{t \in [0,T]} \sup_{ |k|  \leq M, \, k \neq 0} \langle t \rangle^{s+ 1 - 2k}|\zeta_{k}(t) |  +  \sup_{t \in [0, T ]} \|g(t) \|_{\Hc^{s-2 M - 2}}.$$
    One can then obtain that 
    \begin{theorem}
Let us fix $s  \geq  \max ( 4 M + 2, 2M+5) $  and $R_{0}>0$ such that $Q_{0, s}(g) \leq R_{0}$ and assume that $\eta\in \Hc^{s+4}$ satisfies the assumption ${\bf (H)}$.  Then there exists $R>0$ and $\eps_{0}>0$ such that for every $\eps \in (0, \eps_{0}]$ and
 for every $T\geq 0$, we have the estimate
 $$ Q_{T, s}(g) \leq R.$$
\end{theorem}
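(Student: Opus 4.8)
The plan is to run, essentially verbatim, the three-step scheme of Propositions \ref{propzeta}, \ref{propgs} and \ref{propgs-4}, the only genuinely new work being the bookkeeping of the mode-dependent weights now built into $Q_{T,s}$. The starting point is again the Fourier form of \eqref{eq:vp3}: for each $n$ with $|n|\le M$, $n\neq 0$, setting $\xi = nt$ in the analogue of \eqref{fourier1} produces a Volterra equation of the form \eqref{volterra1} for $\zeta_n$, in which the diagonal kernel $K(n,\cdot)$ is exactly the one of Section \ref{sectionpenrose} and the source $F_n$ collects all the terms $\hat g_{n-k}(\sigma,\,nt-k\sigma)$ with $|k|\le M$, $k\neq 0$. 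Since \textbf{(H)} is now assumed for every such $n$, Lemma \ref{lemvolterra} applies mode by mode and gives the weighted bound $M_{T,\,s+1-2|n|}(\zeta_n)\le C\,M_{T,\,s+1-2|n|}(F_n)$; its high/low-frequency splitting uses only the decay \eqref{vol1} of $\hat K(n,\cdot)$ and the non-vanishing of $1-\hat K(n,\tau)$, both uniform over the finitely many values of $n$.

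The first substantial step is to estimate each source term $F_n$. The new phenomenon, absent when $M=1$, is that the factor $\hat g_{n-k}(\sigma,\,nt-k\sigma)$ can be resonant at the \emph{interior} echo time $\sigma=nt/k$, which lies in $(0,t)$ precisely when $n$ and $k$ have the same sign and $|n|<|k|$ (for $M=1$ the only resonance sits at the endpoint $\sigma=t$, through the zero mode $\hat g_0$, which is why that case was so easy). Away from the echo time the non-stationary factor $\langle nt-k\sigma\rangle$ supplies decay through Lemma \ref{lememb}; near it I would localize the integral and absorb the loss with the non-growing lower-order norm $\|g\|_{\Hc^{s-2M-2}}$, exactly as was done for $F^2_n$ and $I^2_n$ in Proposition \ref{propzeta}. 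The weights $s+1-2|k|$ are calibrated so that every channel closes with the target rate $\langle t\rangle^{-(s+1-2|n|)}$: the non-resonant channels, estimated against the growing norm $N_{T,s}(g)\sim\langle\sigma\rangle^{2M+1}$, give a convergent time integral precisely when $s> 2M+2|k|+1$, whose worst case $|k|=M$ forces $s\ge 4M+2$, while the resonant pieces, estimated against $\|g\|_{\Hc^{s-2M-2}}$, require $s\ge 2M+5$. Verifying that each of the finitely many echo channels closes with its assigned rate is the heart of the matter and the main obstacle.

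Next I would carry out the energy estimates. Lemma \ref{lemcom} generalizes with the sum over $\{\pm1\}$ replaced by the sum over $|k|\le M$, $k\neq 0$, each term contributing a factor $\langle\sigma\rangle^2|\zeta_k(\sigma)|$; summed against the mode weights this produces at worst a growth $\langle t\rangle^{2M}$ in the top-order estimate, which is exactly why the high norm is permitted to grow like $\langle t\rangle^{2M+1}$ in $Q_{T,s}$. Feeding the bound on $M_{T,\cdot}(\zeta)$ into \eqref{energie1} and applying Gronwall as in Proposition \ref{propgs} controls $N_{T,s}(g)$, and the same computation run with $D=D^{m,p,q}$, $p+q\le s-2M-2$, controls $\|g\|_{\Hc^{s-2M-2}}$ as in Proposition \ref{propgs-4}; the loss of $2M+2$ derivatives is dictated by the worst resonant channel, and the condition $s\ge 2M+5$ again ensures that the remaining time integrals converge.

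Finally, combining the three families of estimates yields $Q_{T,s}(g)\le C(R_0+\varepsilon R^2)(1+\varepsilon R)e^{C\varepsilon R}$ under the a priori bound $Q_{T,s}(g)\le R$, and choosing $R>CR_0$ and then $\varepsilon$ small closes the continuation argument exactly as in the proof of Theorem \ref{maintheo}. The reason the scheme still terminates at finite Sobolev order is structural: because $M$ is finite and each echo channel costs only a bounded, explicitly tracked amount of regularity (two derivatives per unit of mode), the required orders $4M+2$ and $2M+5$ stay finite — whereas for the Coulomb kernel the unbounded cascade of modes is precisely what keeps the Sobolev problem open.
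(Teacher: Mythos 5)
Your outline reproduces the paper's scheme faithfully: the three-component norm, the mode-by-mode Volterra step via Lemma \ref{lemvolterra} under {\bf (H)} for all $|n| \leq M$, the generalization of Lemma \ref{lemcom}, the thresholds $s \geq 4M+2$ (non-resonant channels measured against $N_{T,s} \sim \langle t \rangle^{2M+1}$) and $s \geq 2M+5$ (the $n=k$ channel against $\|g\|_{\Hc^{s-2M-2}}$), and the final continuation argument. But at the single genuinely new point, which you rightly call the heart of the matter --- the interior echo $\sigma_* = nt/k \in (0,t)$, i.e. $n,k$ of the same sign with $1 \leq |n| < |k|$ --- your proposed mechanism is not the paper's and, as stated, does not close. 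The analogy with $F^2_n$ and $I^2_n$ breaks down: there the resonance sits at the \emph{endpoint} $\sigma = t$, where the factor $(t-\sigma)$ vanishes and $\zeta_n(\sigma)$ already carries the full target decay. At an interior echo, $(t-\sigma) \sim t$ does not vanish, and the low norm only supplies a pointwise $O(1)$ bound on $\hat g_{n-k}$ (the $\sigma$-integral across the resonance is $O(1)$); what remains is pure weight bookkeeping. If $\zeta_j$ decays like $\langle t \rangle^{-\gamma_j}$, the echo contributes $\langle t \rangle^{\gamma_n + 1 - \gamma_k}$ to $\langle t \rangle^{\gamma_n} |\zeta_n(t)|$, so closing requires $\gamma_k \geq \gamma_n + 1$ for $|k| > |n|$: the weights must \emph{increase} with the mode index. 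With the weights $\gamma_j = s+1-2|j|$ that you adopt (decreasing in $|j|$), the estimate on the resonant region reads
$$
\langle t \rangle^{s+1-2n} \, t \int_{\frac{n}{2k}t}^{t} \frac{\dd \sigma}{\langle \sigma \rangle^{s+1-2k}} \; \sim \; \langle t \rangle^{2k - 2n + 2},
$$
which diverges for $1 \leq n < k$. No amount of regularity in the lower-order norm repairs this, because the deficit is in time decay, not in derivatives.

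For comparison, the paper's own treatment splits at $\sigma = \frac{n}{2k} t$ (the inner region is non-resonant since $nt - k\sigma \geq nt/2$) and concludes the outer region with the bound $C \langle t \rangle^{2n-2k+2}$, bounded since $1 \leq n \leq k-1$. That exponent is exactly what the above computation produces under the \emph{increasing} assignment $\gamma_{|j|} = s+1-2(M+1-|j|)$ (which reduces to $s-1$ when $M=1$), and is inconsistent with the weight $\langle t \rangle^{s+1-2|j|}$ as printed in the definition of $Q_{T,s}$ in that section --- so in setting up the norm you must give faster decay to higher modes, not slower. Reassuringly, the two conventions use the same set of exponents (minimum $s+1-2M$, maximum $s-1$), merely reassigned across modes, so everything else in your sketch survives verbatim: the opposite-sign and $n>k$ channels still need only $s \geq 4M+2$, the $n=k$ channel still needs $s \geq 2M+5$, Lemma \ref{lemcom} with $m_{\sigma,\gamma}(\zeta)$ taken over $|k| \leq M$ still yields the $\langle t \rangle^{2M+1}$ growth of the top norm and the uniform bound in $\Hc^{s-2M-2}$, and the continuation argument closes as in Theorem \ref{maintheo}. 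The gap is confined to, but real at, the echo calibration: your claim that ``the weights $s+1-2|k|$ are calibrated so that every channel closes'' is precisely the assertion that a computation refutes, and the fix is to reverse the monotonicity of the mode weights rather than to invoke the low norm.
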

 It is then easy to get from this result a nonlinear damping effect as previously.
 
  The proof of   this result follows exactly  the same lines as the proof of Theorem \ref{maintheo}. The  estimates for
   $ \sup_{t \in  [0, T]} { \|g(t) \|_{\Hc^s} \over \langle t \rangle^{2M+1}}$ and  for $  \sup_{t \in [0, T ]} \|g(t) \|_{\Hc^{s-2 M - 2}}$ can be obtained exactly in the same way
    as in Proposition \ref{propgs}  and Proposition \ref{propgs-4}. The only technical difference is that in Lemma \ref{lemcom}, we  define
    $$m_{\sigma, \gamma }(\zeta)= \langle \sigma \rangle^\gamma \sup_{ |k| \leq M, \, k \neq 0 } | \zeta_{k}(\sigma)|.$$
    
     The only part were we need to be careful is to estimate $\sup_{t \in [0,T]} \sup_{ |k|  \leq M, \, k \neq 0} \langle t \rangle^{s+ 1 - 2k}|\zeta_{k}(t) |$
      as in Proposition \ref{propzeta} since more resonances are possible in the integral equation \eqref{zetanp1}.  By  using the Volterra equation \eqref{volterra1} for $|n| \leq M$, $n \neq 0$, we still get that
      $$ \sup_{t \in [0, T]} \langle t \rangle^{s+ 1 - 2n }  | \zeta _{n}(t) |\leq   \sup_{t \in [0, T]} \langle t \rangle^{s+ 1 - 2n }  |  F_{n}(t)|$$
       and we only need to estimate the right hand side. 
       
     The only difficulty is to estimate the contribution of the integral terms
      $$ J_{n}=  \sup_{t \in [0, T]} \langle t \rangle^{s+ 1 - 2n } 
        \sum_{ | k| \leq M, \, k \neq 0}  |p_k|  \int_0^t  |\zeta_k(\sigma) | \, |  \hat g_{n-k}(\sigma, nt - k\sigma)  | | k n|(t-\sigma)  Ê\dd \sigma$$
         for $ | n |  \leq M$.
        
       If $k$ and $n$ have opposite sign, then, we can proceed as in the estimate of $F_{n}^1$ in the proof of Proposition \ref{propzeta}, we find
    \begin{multline*}    \langle t \rangle^{s+ 1 - 2n }  \int_0^t  |\zeta_k(\sigma) | \, |  \hat g_{n-k}(\sigma, nt - k\sigma) | |k n|(t-\sigma) Ê\dd \sigma \\ \leq  C  Q_{t,s}(g)^2
     \langle t \rangle^{s+ 1 - 2n }  \int_{0}^t { \langle \sigma \rangle^{ 1+ 2M} (t- \sigma) \over \langle  \sigma \rangle^{ s+ 1 - 2k} \langle t \rangle^{ s} }\, \dd \sigma
      \\  \leq  C Q_{t, s}(g)^2 \langle t\rangle^{2 - 2 n } \int_{0}^{+ \infty} { 1 \over \langle \sigma \rangle^{ s- 4M} } \, \dd \sigma
      \end{multline*}
      which is uniformly bounded since $ s \geq 4M + 2$ and $ |n| \geq 1$.
      
      Now let us assume that $k$ and $n$ have the same sign. We can assume that  $k \geq 1$ and  $n \geq 1$, the other situation being similar. 
         If $n>k$, then we have that $nt- k \sigma \geq (k+1) t - k \sigma \geq  t$ and hence we can use the same bound as above.
         If $n=k$, we can proceed exactly as for the term  $F_{n}^2$ in the proof of Proposition \ref{propzeta}.
       It remains to handle the case $n<k$ which is new.
      For this one,  we split the time integral in the region $ \sigma \leq  {n \over 2 k} t$ and the region
       $ \sigma \geq   {n \over 2 k} t$.
        For the first region we have $ nt - k \sigma \geq { nt /2}$ and hence this part of the integral can be handled as previously.
       For the region   $\sigma \geq   {n \over 2 k} t$, we estimate it by  
      \begin{multline*}  \langle t \rangle^{s+ 1 - 2n }  \int_0^t  |\zeta_k(\sigma) | \, |  \hat g_{n-k}(\sigma, nt - k\sigma) | |k n|(t-\sigma) Ê\dd \sigma \\  \leq  C  Q_{t,s}(g)^2
       \langle t \rangle^{s+ 1 - 2n } \int_{{n \over 2 k} t}^t { t  \over \langle \sigma \rangle^{ s+ 1 - 2k}}\, \dd\sigma   \leq  C  \langle t \rangle^{2n - 2 k + 2 }.
       \end{multline*}
        This term is uniformly bounded since  $1 \leq n \leq k-1$.  
   
%
%
%
%
%

\end{document}